\def\eod{\vrule height 6pt width 5pt depth 0pt}
\newenvironment{proof}{\noindent {\bf Proof:} \hspace{.2em}}
                      {\hspace*{\fill}{\eod}}
\newcommand{\ceil}[1]{\left\lceil #1 \right\rceil}
\newcommand{\floor}[1]{\left\lfloor #1 \right\rfloor}
\newcommand{\altr}{ \mathrm{altruns}}
\newcommand{\SR}{ \mathrm{SgnAltrun}}
\newcommand{\SAS}{ \mathrm{SgnAltseq}}
\newcommand{\SSS}{\mathfrak{S}}
\newcommand{\AS}{ \mathrm{Altseq}}
\newcommand{\AAA}{\mathcal{A}}
\newcommand{\BB}{\mathfrak{B}}
\newcommand{\DD}{\mathfrak{D}}
\newcommand{\ZZ}{\mathbb{Z}}
\newcommand{\inv}{\mathrm{inv}}
\newcommand{\as}{\mathrm{as}}
\newcommand{\Sf}{\mathrm{Sgn\_flip}}
\newcommand{\Negs}{\mathrm{Negs}}
\newcommand{\nmhalf}{\lfloor (n-1)/2\rfloor}
\newcommand{\ol}[1]{\overline{#1}}
\newtheorem{theorem}{Theorem}
\newtheorem{remark}[theorem]{Remark}
\newtheorem{example}[theorem]{Example}
\newtheorem{lemma}[theorem]{Lemma}
\newcommand{\comment}[1]{}
\begin{document}
\title{On the Alternating runs polynomial in type B and D Coxeter Groups}

\author{Hiranya Kishore Dey\\
Department of Mathematics\\
Indian Institute of Technology, Bombay\\
Mumbai 400 076, India.\\
email: hkdey@math.iitb.ac.in
\and
Sivaramakrishnan Sivasubramanian\\
Department of Mathematics\\
Indian Institute of Technology, Bombay\\
Mumbai 400 076, India.\\
email: krishnan@math.iitb.ac.in
}

\maketitle

\begin{abstract}
Let $R_n(t)$ denote the polynomial enumerating alternating runs in 
the symmetric group $\SSS_n$.  Wilf showed that $(1+t)^m$ divides 
$R_n(t)$ where $m = 
\floor{(n-2)/2}$.  B{\'o}na recently gave a group action based 
proof of this fact.  In this work, we give a group action based 
proof for type B and type D analogues of this result.  Interestingly,
our proof gives a group action on the positive parts $\BB_n^{\pm}$
and $\DD_n^{\pm}$ and so we get refinements of the result to the
case when summation is over $\BB_n^{\pm}$ and $\DD_n^{\pm}$.  
We are unable to get a group action based proof of Wilf's
result when summation
is over the alternating group $\AAA_n$ and over $\SSS_n - \AAA_n$,
but using other ideas, give a different proof.
We give similar results to the polynomial which enumerates 
{\sl alternating sequences} in $\AAA_n$, $\SSS_n - \AAA_n$,
 $\BB_n^{\pm}$ and $\DD_n^{\pm}$.
As a corollary, we get moment type identities for coefficients
of such polynomials.
\end{abstract}

\section{Introduction}
\label{sec:intro}
For a positive integer $n$, let $[n] = \{1,2,\ldots,n\}$.  Let $\SSS_n$ 
denote the symmetric group on $[n]$.  
For an index $i$ with $2 \leq i \leq n-1$, 
we say that $\pi \in \SSS_n$ changes direction at $i$, if either
$\pi_{i-1} < \pi_i > \pi_{i+1}$ or if 
$\pi_{i-1} > \pi_i < \pi_{i+1}$. 
We say that $\pi$ has $k$ alternating runs, denoted by $\altr(\pi) = k$,
if $\pi$ has $k-1$ indices where it changes direction. For example, 
the permutation $\pi=1,3,2,4,6,5 \in \SSS_6$ has $4$ alternating runs.
Consider the polynomial
\begin{equation*}
R_n(t)= \sum_{\pi \in \SSS_n}t^{\altr(\pi)} = \sum_{k=1}^{n-1}R_{n,k}t^k.
\end{equation*}	

Andr{\'e} in \cite{Andrealtrun} started the study of permutations enumerated
by its number of alternating runs. 
Canfield and Wilf in \cite{canfieldwilfalternatingrun}, 
Stanley in \cite{stanleyalternatingsequence} and later 
Ma in \cite{maaltrun} give explicit formulae for $R_{n,k}$.
Wilf in \cite{wilfaltrun}, showed the following result
about the exponent of $(1+t)$ that divides $R_n(t)$. 

\begin{theorem}[Wilf]
\label{thm:divisibilty_of_alternatingrunpolynomial_by_highpower_oneplust}
For positive integers $n \geq 4$, the polynomial $R_n(t)$ is divisible by 
$(1 + t)^m$, where $m = \floor{(n -2)/2}$. 
\end{theorem}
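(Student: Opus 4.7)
The plan is to induct on $n$ using a differential recurrence for $R_n(t)$. Starting from the classical three-term recurrence
\[
R_{n,k} = k\, R_{n-1,k} + 2\, R_{n-1,k-1} + (n-k)\, R_{n-1,k-2},
\]
(obtained by inserting $n$ into each of the $n$ possible positions of a permutation of $[n-1]$ and tracking how the number of alternating runs changes), a direct manipulation of generating functions yields
\[
R_n(t) = t(1-t^2)\, R'_{n-1}(t) + t\bigl(2 + (n-2)t\bigr) R_{n-1}(t).
\]

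With the base cases $n = 4, 5$ handled by direct computation (e.g.\ $R_4(t) = 2t(1+t)(1+5t)$), the inductive step runs as follows. Set $m' = \floor{(n-3)/2}$ and write $R_{n-1}(t) = (1+t)^{m'} Q(t)$. Since differentiation lowers the multiplicity of the root $t=-1$ by at most one, the explicit factor $(1+t)$ in $(1-t^2) R'_{n-1}(t) = (1-t)(1+t) R'_{n-1}(t)$ compensates, so both terms on the right-hand side are divisible by $(1+t)^{m'}$, and hence so is $R_n(t)$. When $n$ is odd, $m = m'$, and we are done.

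The main obstacle is the even case, where $m = m'+1$, so one additional factor of $(1+t)$ must be found. To extract it, I would divide the recurrence by $(1+t)^{m'}$ and, using $R'_{n-1}(t) = m'(1+t)^{m'-1} Q(t) + (1+t)^{m'} Q'(t)$, evaluate the resulting expression at $t = -1$. A short computation gives
\[
\left.\frac{R_n(t)}{(1+t)^{m'}}\right|_{t=-1} \;=\; \bigl(n - 4 - 2m'\bigr)\, Q(-1).
\]
When $n$ is even, $m' = (n-4)/2$, so the factor $n - 4 - 2m'$ vanishes and the quotient acquires the extra root at $t = -1$, upgrading the divisibility to $(1+t)^{m'+1} = (1+t)^m$. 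Note that for $n$ odd the same factor equals $-1$, explaining why the bound $m = \floor{(n-2)/2}$ is parity-sensitive and confirming that no improvement of the exponent is available through this argument in the odd case.
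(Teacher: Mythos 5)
Your argument is correct, but it follows a genuinely different route from the one this paper takes. The paper does not reprove Wilf's theorem at all: it cites it and instead builds a group-action proof (extending B\'ona's recent argument) for the type B and D analogues, exhibiting $m$ pairwise commuting involutions $\Sf_i$, each of which changes the number of alternating runs by exactly one, so that every orbit of the resulting $\ZZ_2^m$-action has size $2^m$ and contributes $t^a(1+t)^m$ to the generating polynomial. What you give is essentially the inductive proof of B\'ona and Ehrenborg mentioned in the introduction: the derivation of $R_n(t)=t(1-t^2)R'_{n-1}(t)+t(2+(n-2)t)R_{n-1}(t)$ from the classical three-term recurrence is right, the base case $R_4(t)=2t(1+t)(1+5t)$ is right, and the evaluation of the quotient at $t=-1$ does come out to $(n-4-2m')Q(-1)$, whose vanishing for even $n$ supplies the extra factor exactly as you say (note $m'=\floor{(n-3)/2}\geq 1$ for $n\geq 5$, so the derivative step never goes negative). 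Your approach buys a short, self-contained proof and a transparent explanation of why the exponent is parity-sensitive; the orbit approach buys locality, which is the point of this paper: since each orbit separately contributes $t^a(1+t)^m$, the divisibility persists on any union of orbits, which is what yields the refinements to $\BB_n^{>,\pm}$ and $\DD_n^{>,\pm}$ in Theorems \ref{thm:dbl_rstrct_div_b} and \ref{thm:dbl_rstrct_div_d}, and it additionally shows that the quotient by $(1+t)^m$ has nonnegative coefficients. Your induction establishes the divisibility globally but does not localize in this way.
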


Wilf's proof depends on a relation between 
the Eulerian polynomial and $R_n(t)$. Later, B{\'o}na and  Ehrenborg 
in \cite{bonaehrenborglogconcavity} gave an inductive proof of Theorem 
\ref{thm:divisibilty_of_alternatingrunpolynomial_by_highpower_oneplust}. 
Recently, B{\'o}na in \cite{bonaaltrun} gave a 
proof 
of Theorem
\ref{thm:divisibilty_of_alternatingrunpolynomial_by_highpower_oneplust}
based on group actions.  Inspired by B{\'o}na's proof, in this paper, 
we give a group action based proof of type B and type D counterparts
of Theorem 
\ref{thm:divisibilty_of_alternatingrunpolynomial_by_highpower_oneplust}.

Let $\BB_n$ be the set of permutations of 
$[\pm n] = \{\pm 1,\pm 2,\ldots,\pm n\}$ satisfying $\pi(-i)= - \pi(i)$. 
$\BB_n$ is known as the {\sl hyperoctahedral} group or the group 
of signed permutations on $[n]$.  For $\pi \in \BB_n$ and an 
index $1 \leq i \leq n$,  we denote $\pi(i)$ alternatively 
as $\pi_i$ and for $1 \leq k \leq n$, we denote $-k$ as 
$\overline{k}$. 
For $\pi = \pi_1,\pi_2,\ldots,\pi_n \in \BB_n$, define 
$\Negs(\pi) = \{\pi_i : i > 0, \pi_i < 0 \}$ as the set of 
elements which occur in $\pi$ with a negative sign. Let 
$\DD_n\subseteq \BB_n$ denote the subset consisting of those 
elements of $\BB_n$ which have an even number of negative entries. 
$\DD_n$ is referred to as the {\sl demihyperoctahedral} group on 
$[n]$.  For $\pi \in \BB_n$, let $\pi_0=0$.  
For an index $i$ with  $1 \leq i \leq n-1$,
we say that $\pi$ changes direction at $i$ if either
$\pi_{i-1} < \pi_i > \pi_{i+1}$ or if 
$\pi_{i-1} > \pi_i < \pi_{i+1}$. 
We say that $\pi \in \BB_n$ has $k$ alternating runs, 
denoted $\altr_B(\pi) = k$ if $\pi$ has 
$k-1$ indices where it changes direction.  
For example, the permutation 
$5,1,4,\overline{3}, \overline{6},2$ has  $\altr_B(\pi)=5.$ 
In $\DD_n$, we have the same definition of alternating runs, 
that is for $\pi \in \DD_n$, we have $\altr_D(\pi)=\altr_B(\pi)$.
Define 
$\BB_n^> = \{\pi \in \BB_n, \pi_1 > 0 \}$,
$\DD_n^> = \{\pi \in \DD_n, \pi_1 > 0$ \} and
$\BB_n^>-\DD_n^> = \{\pi \in \BB_n-\DD_n, \pi_1 > 0 \}.$
Further, define 
$\BB_n^< = \{\pi \in \BB_n, \pi_1 < 0 \}$,
$\DD_n^< = \{\pi \in \DD_n, \pi_1 < 0$ \} and
$\BB_n^<-\DD_n^< = \{\pi \in \BB_n-\DD_n, \pi_1 < 0 \}.$
Define the following polynomials:
\begin{equation*}
R_n^{B}(t) =  \sum_{\pi \in \BB_n}t^{\altr_B(\pi)}, \hspace{3 mm}
R_n^{D}(t) =  \sum_{\pi \in \DD_n}t^{\altr_D(\pi)}, \hspace{3 mm}
R_n^{B-D}(t) = \sum_{\pi \in \BB_n-\DD_n}t^{\altr_B(\pi)}, 
\end{equation*}
\begin{equation*}
R_n^{B,>}(t)  =  \sum_{\pi \in \BB_n^>}t^{\altr_B(\pi)}, \hspace{3 mm}
R_n^{D,>}(t)  =  \sum_{\pi \in \DD_n^>}t^{\altr_D(\pi)}, \hspace{3 mm}
R_n^{B-D,>}(t)  = \sum_{\pi \in \BB_n^>-\DD_n^>}t^{\altr_B(\pi)},
\end{equation*}
\begin{equation*}
R_n^{B,<}(t)  =  \sum_{\pi \in \BB_n^<}t^{\altr_B(\pi)}, \hspace{3 mm}
R_n^{D,<}(t)  =  \sum_{\pi \in \DD_n^<}t^{\altr_D(\pi)}, \hspace{3 mm}
R_n^{B-D,<}(t)  = \sum_{\pi \in \BB_n^<-\DD_n^<}t^{\altr_B(\pi)}.
\end{equation*}	

Zhao in \cite[Theorem 4.3.2]{zhaothesis} proved the following 
refinement of a type B analogue of Theorem 
\ref{thm:divisibilty_of_alternatingrunpolynomial_by_highpower_oneplust}.
See the paper by Chow and Ma 
\cite{chow-ma-counting-signed-perms-alt-runs} as well.
Later, Gao and Sun in \cite[Corollary 2.4]{Gaosunaltruntyped} 
considered the two polynomials $R_n^{D,>}(t)$ and $R_n^{B-D,>}(t)$ and
gave the following refinement of a type D analogue.
As the results are very similar, we have combined their respective 
results.
Though the form appears to be slightly different, Remark
\ref{rem:firstnegativeequalsfirstpositive-dtype} shows 
that Theorem
\ref{thm:divisibilty_of_alternatingrunpolynomial_by_highpower_oneplust_b}
implies a type B and type D counterpart of Theorem
\ref{thm:divisibilty_of_alternatingrunpolynomial_by_highpower_oneplust}.
Theorem
\ref{thm:divisibilty_of_alternatingrunpolynomial_by_highpower_oneplust_b}
refines and thus implies a type B and type D counterpart of Theorem
\ref{thm:divisibilty_of_alternatingrunpolynomial_by_highpower_oneplust}.

\begin{theorem}[Zhao, Gao and Sun]
\label{thm:divisibilty_of_alternatingrunpolynomial_by_highpower_oneplust_b}
For positive integers $n$, the polynomials $R_n^{B,>}(t)$, 
$R_n^{D,>}(t)$ and $R_n^{B-D,>}(t)$ are all divisible by 
$(1+t)^m$ where $m= \lfloor (n-1)/2\rfloor$.  
\end{theorem}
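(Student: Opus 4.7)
The plan is to prove the $(1+t)^m$-divisibility of all three polynomials $R_n^{B,>}(t)$, $R_n^{D,>}(t)$ and $R_n^{B-D,>}(t)$ simultaneously by constructing, in the spirit of B{\'o}na's type~A argument in \cite{bonaaltrun}, a common action of $G = (\ZZ/2\ZZ)^m$ on $\BB_n^>$ that restricts to both $\DD_n^>$ and $\BB_n^>-\DD_n^>$. For each $i = 1, 2, \ldots, m$, I propose the involution
\[ \phi_i(\pi) \;=\; (\pi_1, \ldots, \pi_{n-2i}, -\pi_{n-2i+1}, -\pi_{n-2i+2}, \ldots, -\pi_n), \]
which negates the last $2i$ entries of $\pi$. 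Since $2i \leq 2m \leq n-1$, $\phi_i$ does not touch $\pi_1$, so it preserves $\BB_n^>$; since it flips an even number ($2i$) of signs, it preserves the parity of $|\Negs(\pi)|$, hence restricts to both $\DD_n^>$ and $\BB_n^>-\DD_n^>$. As sign-flip operations the $\phi_i$'s commute with one another.

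The central claim is that each $\phi_i$ changes $\altr_B(\pi)$ by exactly $\pm 1$. Writing $s_j = \sign(\pi_{j+1} - \pi_j)$ for $0 \leq j \leq n-1$ with $\pi_0 = 0$, we have $\altr_B(\pi) = 1 + \#\{j \in \{1, \ldots, n-1\} : s_{j-1} \neq s_j\}$. Under $\phi_i$ the signs $s_j$ with $j < n-2i$ are unchanged, the signs $s_j$ with $n-2i+1 \leq j \leq n-1$ are all toggled (since both arguments are negated), and the boundary sign $s_{n-2i}$ toggles if and only if $|\pi_{n-2i+1}| > |\pi_{n-2i}|$. A short case-check then shows that exactly one of the two direction changes ``$s_{n-2i-1} \neq s_{n-2i}$'' and ``$s_{n-2i} \neq s_{n-2i+1}$'' flips, while every other direction change is preserved (consecutive toggled signs leave their pairwise comparison invariant, so all interior double-toggles cancel).

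Finally, since the single toggle location of $\phi_i$ lies in the pair $\{n-2i,\, n-2i+1\}$ and these pairs are disjoint for distinct $i$, each $\phi_j$ with $j \neq i$ either leaves the triple $s_{n-2i-1}, s_{n-2i}, s_{n-2i+1}$ entirely fixed (when $j < i$) or entirely toggled (when $j > i$); either way the two candidate direction changes at positions $n-2i$ and $n-2i+1$ survive unchanged, so the increment $\epsilon_i(\pi) := \altr_B(\phi_i \pi) - \altr_B(\pi) \in \{+1, -1\}$ is invariant under all $\phi_j$ with $j \neq i$. This ``linearity'' of the action yields, within each $G$-orbit, a unique representative $\pi_0$ with $\epsilon_i(\pi_0) = +1$ for every $i$, whence $\altr_B(\phi_S \pi_0) = \altr_B(\pi_0) + |S|$ for all $S \subseteq [m]$ and the orbit polynomial equals $t^{\altr_B(\pi_0)}(1+t)^m$; summing over orbits completes the proof for each of the three sets. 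The principal technical obstacle -- that a naive sign-flip at an interior position changes $\altr_B$ by $0$ or $\pm 2$ rather than $\pm 1$ -- is elegantly dispatched by flipping an entire suffix, so that the interior double-toggles cancel and only the boundary toggle contributes.
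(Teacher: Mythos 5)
Your proposal is correct and is essentially the paper's own argument: your $\phi_i$ is exactly the paper's suffix sign-flip $\Sf_{n-2i+1}$, your index set coincides with the paper's $T_n$, and the orbit decomposition, the consistency of the increment under the other generators, and the parity-of-$|\Negs(\pi)|$ observation for the D and $B-D$ cases all match. The only (cosmetic) difference is that you verify the ``changes $\altr_B$ by exactly $\pm 1$'' step via the toggle pattern of the sign sequence $s_j$ rather than the paper's eight-case check.
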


\begin{remark}
\label{rem:firstnegativeequalsfirstpositive-dtype}
Using the map $\Sf_1$ defined in Section 
\ref{sec:sgn_flip_sec}, it is easy to see that 
$R_n^{B,>}(t)= R_n^{B,<}(t)$ and therefore we have 
$R_n^{B}(t)= 2R_n^{B,>}(t)$.   Thus the 
polynomial $R_n^B(t)$ is divisible by $2(1+t)^m$. 

Using the same map, one can also see that 
$R_n^{D,<}(t)= R_n^{D,>}(t)$ when $n$ is even.  
When $n$ is odd, the same map gives us $R_n^{D,<}(t) =
R_n^{B-D,>}(t)$ and $R_n^{D,>}(t) = R_n^{B-D,<}(t)$.
Combining this with Theorem  
\ref{thm:divisibilty_of_alternatingrunpolynomial_by_highpower_oneplust_b},
we get that 
both $R_n^{D,<}(t)$ and  $R_n^{B-D,<}(t)$ 
are divisible by $(1+t)^m$ where $m={\nmhalf}$.  
Thus, when $n$ is even, the polynomial $R_n^D(t)$ 
is divisible by  $2(1+t)^m$ and when $n$ is
odd, $R_n^D(t)$ is divisible by $(1+t)^m.$  
\end{remark}

In Section \ref{Sec:proofofmainthm}, we give a group action based 
proof of Theorem 
\ref{thm:divisibilty_of_alternatingrunpolynomial_by_highpower_oneplust_b}.
Since $\SSS_n ,\BB_n$ and
$\DD_n$ are Coxeter groups they have a length function 
(see the book by Bj{\"o}rner and Brenti 
\cite{bjorner-brenti}) which we denote
by $\inv, \inv_B$ and $\inv_D$ respectively (see definitions
in Section \ref{sec:applns}).
Let $\BB_n^+ = \{ \pi \in \BB_n : \inv_B(\pi) \mbox{ is even}\}$ 
and let $\BB_n^- = \BB_n - \BB_n^+$.  Similarly, let $\DD_n^+ = \{ \pi 
\in \DD_n : \inv_D(\pi) \mbox{ is even}\}$ and let 
$\DD_n^- = \DD_n - \DD_n^+$.  Define 
$\BB_n^{>,\pm} = \BB_n^> \cap \BB_n^{\pm}$,
$\DD_n^{>,\pm} = \DD_n^> \cap \DD_n^{\pm}$ and
$(\BB_n-\DD_n)^{>,\pm} = (\BB_n - \DD_n)^> \cap \BB_n^{\pm}.$

Interestingly, we are able to get a group action based proof
of a refinement of Theorem 
\ref{thm:divisibilty_of_alternatingrunpolynomial_by_highpower_oneplust_b}
to $\BB_n^{>,\pm}$, $\DD_n^{>,\pm}$ and $(\BB_n - \DD_n)^{>,\pm}$.
Our refinements are Theorem \ref{thm:dbl_rstrct_div_b}  and
Theorem  \ref{thm:dbl_rstrct_div_d} and both are
proved in Section \ref{sec:applns}. 
As a corollary, we get moment type identities for the coefficients
of appropriate polynomials.  These are given in Theorem
\ref{thm:moment_id_dbl_rstrct_b} and Theorem
\ref{thm:moment_id_dbl_rstrct_d} respectively.

B{\'o}na's proof when directly applied to permutations in 
$\BB_n$ and $\DD_n$ also works for the type B and the 
type D case.  But, this straightforward extension to 
$\BB_n$, $\DD_n$, $\BB_n^>$ and $\DD_n^>$ does not seem to be 
an action over $\BB_n^{>,\pm}$.  Thus, it does not seem to give a proof 
of Theorem \ref{thm:dbl_rstrct_div_b} or 
Theorem \ref{thm:dbl_rstrct_div_d}.  
In Remark \ref{rem:on-bona}, we elaborate on this point.
We have given a refinement of Theorem 
\ref{thm:divisibilty_of_alternatingrunpolynomial_by_highpower_oneplust}
with summation over the alternating group $\AAA_n$ and $\SSS_n - \AAA_n$ 
in \cite[Theorem 3]{dey-siva-sgn-alt-runs-enum}, but our
proof there is not group action based.  We are unable to
get a group action that both preserves sign in $\SSS_n$ and gives
an exponent of roughly $n/2$ (see Remark \ref{rem:bona-not-for-An}).
It would be very interesting to get a group action based proof
of \cite[Theorem 3]{dey-siva-sgn-alt-runs-enum} that preserves sign.

B{\'o}na in \cite{bonaaltrun} also gave similar results about alternating
sequences.  For a permutation $\pi=\pi_1,\pi_2, \dots, \pi_n \in \SSS_n$,
an alternating subsequence of $\pi$ is a subsequence 
$\pi_{i_1}, \pi_{i_2}, \dots, \pi_{i_l}$ satisfying
$\pi_{i_1}> \pi_{i_2}< \pi_{i_3}> \dots \pi_{i_l}.$  Denote by $\as(\pi)$
the length of the longest alternating subsequence of $\pi$.  
For $\pi = \pi_1, \pi_2, \ldots, \pi_n \in \SSS_n$, define its number 
of inversions as $\inv(\pi) = |\{ 1 \leq i < j \leq n : \pi_i > \pi_j \}|$. 
Define the following polynomials
\begin{eqnarray*}
\AS_n(t) & = & \sum _{\pi \in \SSS_n} t^{\as(\pi)}, 
\hspace{10 mm}
\SAS_n(t)  =  \sum _{\pi \in \SSS_n}(-1)^{\inv(\pi)} t^{\as(\pi)}, \\
\AS_n^+(t) & = & \sum _{\pi \in \AAA_n} t^{\as(\pi)}, 
\hspace{24.5 mm}
\AS_n^-(t)  =  \sum _{\pi \in \SSS_n - \AAA_n} t^{\as(\pi)}.
\end{eqnarray*}

In \cite[Section 1.3.2]{bona-combin-permut},
B{\'o}na presents the following.
\begin{equation}
\label{eqn:reln_altruns-altseq}
\AS_n(t)= \displaystyle  \frac {(1+t)R_n(t)} {2}.
\end{equation}
An immediate consequence of \eqref{eqn:reln_altruns-altseq}
and Theorem \ref{thm:Bona_relation_between_altrun_altrsubseq} is
the following.

\begin{theorem}[B{\'o}na]
\label{thm:Bona_relation_between_altrun_altrsubseq}
For integers $n \geq 4$, the polynomial
$\AS_n(t)$ is divisible by $(1+t)^{\floor{n/2}}$.
\end{theorem}

B{\'o}na in \cite{bonaaltrun} also mentions that one can prove 
Theorem 
\ref{thm:Bona_relation_between_altrun_altrsubseq} directly by using 
the same group action he used to prove Theorem 
\ref{thm:divisibilty_of_alternatingrunpolynomial_by_highpower_oneplust}.
As mentioned above, we are unable to get a group action on $\AAA_n$
and on $\SSS_n - \AAA_n$.  We elaborate on it below.

\begin{remark}
\label{rem:bona-not-for-An}
We recall the group action defined by B{\'o}na. Given 
$\pi \in \SSS_n$ and an index  $i,$ consider the sequence 
$S_i = \pi_i, \dots, \pi_n$ of elements of $\pi$ starting
from the $i$-th position.  Let $c_i: \SSS_n \mapsto \SSS_n$ be the
map which replaces the $p$-th smallest element by the $p$-th largest 
element for all values of $p$ in the sequence $S_i$. 
Let $C_n= \{c_1, c_3, c_5, \dots c_t \}$  where $t = n - 1$ if $n$
is even and $t = n-2 $ if $n$ is odd.  B{\'o}na shows that 
$C_n$ acts on $\SSS_n$, but this action does not restrict to 
$\AAA_n$.  For instance, let $n=8$ and let $\pi= 2,1,4,3,6,5,8,7 \in
\SSS_8$.   It is easy to check that $\inv(\pi)=4$ and 
hence $\pi \in \AAA_8$ is an even permutation.   However, 
it can be checked  that $c_3(\pi)= 2,1,7,8,5,6,3,4$ has 
$\inv(c_3(\pi))=13$ and is thus an odd permutation.

Therefore we are unable to prove Theorem 
\ref{thm:divisibility_altseq_typea_pm} by using B{\'o}na's action.  
We can however get the following weaker
version.  It is easy to see that for a
permutation $\pi$, the permutation $c_i(\pi)$ will have the same 
sign if and only if $n-i+1 \equiv 0,1 $ (mod 4) (see 
\cite[Lemma 10]{siva-dey-gamma_positive_descents_alt_group} for
a proof).  Moreover, the key requirement of \cite[Lemma 2.4]{bonaaltrun}
is that indices $i$'s in $C_n$ should be non-consecutive.  
From these two conditions, we can choose $C_n= \{1,5,9,\dots,4k-3 \}$ 
when $n=4k$ or $n=4k+1$ and choose $C_n= \{3,7,11,\dots,4k-1 \}$ 
when $n=4k+2 $ or $n=4k+3$.
One can check that this $C_n$ acts on $\AAA_n$ and on
$\SSS_n - \AAA_n$. From this, we get 
that 
the polynomials $\AS_n^{\pm}(t)$ are divisible by $(1+t)^m$ 
where $m=\floor{n/4}$.
\end{remark}


The proof of our earlier result 
\cite[Theorem 3]{dey-siva-sgn-alt-runs-enum} gives refined 
counting which we use to give a counterpart of 
Theorem \ref{thm:Bona_relation_between_altrun_altrsubseq} for 
$\AAA_n$ and $\SSS_n - \AAA_n$. In Section \ref{sec:alt-seq-results}
of this paper, we prove the following.

\begin{theorem}
\label{thm:divisibility_altseq_typea_pm}
For positive integers $n$, let $m= \floor{n/2}$. When $n \equiv 0,1$ 
(mod 4), $\AS_n^{\pm}(t)$ is
divisible by $(1+t)^m$.   When $n \equiv 2,3$ (mod 4), $\AS_n^{\pm}(t)$ 
is divisible by $(1+t)^{m-1}$.
\end{theorem}

Clearly, when $n \equiv 0,1$ (mod 4), Theorem 
\ref{thm:divisibility_altseq_typea_pm} refines Theorem
\ref{thm:divisibilty_of_alternatingrunpolynomial_by_highpower_oneplust}
and when $n \equiv 2,3$ (mod 4), it is a near refinement as the
exponent falls short by 1.  
We move on to type B and type D counterparts of Theorem
\ref{thm:divisibility_altseq_typea_pm}.
For $\pi = \pi_0, \pi_1, \dots, \pi_n  \in \BB_n$
with $\pi_0=0$, an alternating subsequence of $\pi$ is a
subsequence $\pi_{i_1} > \pi_{i_2} < \pi_{i_3} > \dots ...\pi_{i_l}$.
By $\as_B(\pi)$, we denote the length of the longest alternating
subsequence of $\pi$. For $\pi \in \DD_n$,
we have the same definition of alternating subsequence
as in $\BB_n$ and so, let $\as_D(\pi)= \as_B(\pi)$.  
Define the following polynomials
\begin{eqnarray*}
   \AS_n^B(t) & = & \sum _{\pi \in \BB_n} t^{\as_B(\pi)}, 
   \hspace{10 mm}
   \AS_n^D(t)  =  \sum _{\pi \in \DD_n} t^{\as_D(\pi)}, \\
   \AS_n^{B,\pm}(t) & = & \sum _{\pi \in \BB_n^{\pm}} t^{\as_B(\pi)}, 
   \hspace{10 mm}
   \AS_n^{D,\pm}(t)  =  \sum _{\pi \in \DD_n^{\pm}} t^{\as_D(\pi)}. \\
\end{eqnarray*}
In Section \ref{sec:alt-seq-results} we give the 
following type B and type D counterpart of 
Theorem \ref{thm:Bona_relation_between_altrun_altrsubseq}.

\begin{theorem}
\label{thm:divisibility_altseq_typebd}
For positive integers $n$, the polynomials $\AS_n^B(t)$ are
divisible by $(1+t)^{\ceil{\frac{n}{2}}}$. For positive
integers $n$, the polynomial $\AS_n^D(t)$ is divisible by
$(1+t)^{\floor{\frac{n}{2}}}$. Further, for positive integers $n$,
the polynomials $\AS_n^{B, \pm}(t)$ and $\AS_n^{D, \pm}(t)$
are divisible by $(1+t)^{\floor{\frac{n}{2}}}$.
\end{theorem}

\section{The sign-flip map and its properties}
\label{sec:sgn_flip_sec}

Let $\pi = \pi_1, \pi_2, \dots, \pi_n \in \BB_n$. For 
$1 \leq k \leq n$, we define the \emph {sign-flip} map 
as follows:
$$\Sf_k(\pi_1, \pi_2, \dots, \pi_n)= 
\pi_1, \pi_2, \dots, \pi_{k-1}, \overline{\pi_{k}}, \dots, \overline{\pi_n} .$$
That is, $\Sf_k(\pi)$ flips the sign of all the elements 
of $\pi$ from $\pi_k$ onwards.

\begin{example}
Let $\pi=1,3,2,\overline{6},\overline{4},5 \in \BB_6$. 
Then, $\Sf_1(\pi)=\overline{1},\overline{3},\overline{2},6,4,\overline{5}.$ 
When $k=4$, we have $\Sf_4(\pi)= 1,3,2,6,4,\overline{5}$ and when
$k=5$, we have $\Sf_5(\pi)= 1,3,2,\overline{6},4,\overline{5}.$
\end{example}

We note some basic properties of the map $\Sf_i$.

\begin{lemma}
For positive integers $1 \leq i \leq n$, the map $\Sf_i$ is an involution.
\end{lemma}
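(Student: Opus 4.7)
The proof plan is to unwind the definition of $\Sf_i$ and verify two things: first, that $\Sf_i(\pi)$ actually lies in $\BB_n$, and second, that $\Sf_i \circ \Sf_i$ is the identity on $\BB_n$.

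For the first point, I would note that flipping the sign of a nonzero integer does not change its absolute value, so $\{|\Sf_i(\pi)_j| : 1 \leq j \leq n\} = \{|\pi_j| : 1 \leq j \leq n\} = [n]$, and hence $\Sf_i(\pi)$ is a valid signed permutation in $\BB_n$. (One implicitly extends the action to negative indices by the rule $\pi(-j) = -\pi(j)$, which is preserved since negation is an odd operation.)

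For the second point, I would fix an index $j \in \{1, 2, \ldots, n\}$ and split into cases. If $j < i$, then $\Sf_i(\pi)_j = \pi_j$, and applying $\Sf_i$ again leaves it fixed, giving $\pi_j$. If $j \geq i$, then $\Sf_i(\pi)_j = \overline{\pi_j} = -\pi_j$, and applying $\Sf_i$ a second time flips the sign again to yield $-(-\pi_j) = \pi_j$. In both cases $\Sf_i(\Sf_i(\pi))_j = \pi_j$, so $\Sf_i^2 = \mathrm{id}$.

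There is no real obstacle here; the result is immediate from the definition, and the only thing worth being careful about is the bookkeeping between the two index regimes $j < i$ and $j \geq i$. I would therefore keep the proof to a few lines.
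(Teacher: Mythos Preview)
Your proposal is correct and follows essentially the same approach as the paper's proof, which is a one-line computation showing $\Sf_i(\Sf_i(\pi)) = \pi$. You are simply more explicit, adding the (harmless) check that $\Sf_i(\pi)\in\BB_n$ and spelling out the two cases $j<i$ and $j\geq i$ that the paper leaves implicit.
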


\begin{proof}
Let $\pi= \pi_1, \pi_2, \dots, \pi_n \in \BB_n$. Clearly,
\begin{equation*}
	\Sf_i( \Sf_i(\pi))  =  \Sf_i (\pi_1, \pi_2, \dots,\pi_{i-1}, \overline{\pi_i},\dots, \overline{\pi_n} ) 
	 =  \pi
\end{equation*}
The proof is complete.
\end{proof}

We next show that for distinct indices $i$ and $j$, the maps $\Sf_i$ 
and $\Sf_j$ commute.

\begin{lemma}
\label{lem:commutativity}	
Let $1 \leq i < j \leq n$. Then, the maps $\Sf_i$ and $\Sf_j$ 
commute. That is, for $\pi \in \BB_n$, we have 
$\Sf_i( \Sf_j(\pi)) = \Sf_j( \Sf_i(\pi)).$
\end{lemma}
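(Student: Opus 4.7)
The plan is to show directly that $\Sf_i(\Sf_j(\pi))$ and $\Sf_j(\Sf_i(\pi))$ agree on each coordinate by tracking, position-by-position, how many times a sign flip has been applied, and then observing that only the parity of this count matters (since $\ol{\ol{x}} = x$).

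First I would split the index set $\{1,2,\ldots,n\}$ into three consecutive blocks determined by $i<j$: the prefix $\{1,\ldots,i-1\}$, the middle block $\{i,\ldots,j-1\}$, and the tail $\{j,\ldots,n\}$. For a fixed $\pi\in \BB_n$, I would compute $\Sf_j(\pi)$ (which leaves the prefix and middle block unchanged and flips the tail), then apply $\Sf_i$ to the result (which leaves the prefix unchanged, flips the middle block once, and flips the tail a second time, restoring it). So the composition $\Sf_i\circ \Sf_j$ acts as the identity on the prefix and tail and as a sign flip on the middle block.

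Next I would carry out the symmetric calculation for $\Sf_j(\Sf_i(\pi))$: $\Sf_i$ flips both the middle block and the tail, then $\Sf_j$ flips the tail a second time, leaving the middle block with exactly one flip and the tail unchanged. Comparing the two results coordinate-by-coordinate shows they are identical, which proves commutativity. I would also note in passing that this argument is just the additive structure of $(\ZZ/2\ZZ)^n$ applied to the characteristic vectors of the intervals $[i,n]$ and $[j,n]$, but the explicit three-block verification is cleanest for the paper.

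The argument is essentially a one-line case check, so there is no real obstacle; the only care needed is in being precise about which positions lie in which of the three blocks when $i$ and $j$ are adjacent or when $j=n$, but these boundary cases do not change the conclusion since the middle block or tail is then empty and the corresponding flip is vacuous.
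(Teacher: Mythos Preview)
Your proposal is correct and is essentially the same argument as the paper's own proof: both compute $\Sf_i(\Sf_j(\pi))$ and $\Sf_j(\Sf_i(\pi))$ directly by tracking which positions get their sign flipped, arriving at $\pi_1,\ldots,\pi_{i-1},\overline{\pi_i},\ldots,\overline{\pi_{j-1}},\pi_j,\ldots,\pi_n$ in both cases. The only difference is presentational---the paper writes out the intermediate strings explicitly, while you phrase it in terms of the three blocks---so there is nothing substantive to add.
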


\begin{proof}
	We have
	\begin{eqnarray*}
		\Sf_i( \Sf_j(\pi)) & = & \Sf_i (\pi_1, \pi_2, \dots,\pi_{i-1}, \pi_i,\dots, \pi_{j-1}, \overline{\pi_{j}}, \dots, \overline{\pi_n} ) \\
		& = & \pi_1, \pi_2, \dots,\pi_{i-1}, \overline{\pi_i},\dots, \overline{\pi_{j-1}}, \pi_{j}, \dots,\pi_n \\
		& = & \Sf_j (\pi_1, \pi_2, \dots,\pi_{i-1}, \overline{\pi_i},\dots, \overline{ \pi_{j-1}}, \overline{\pi_{j}}, \dots, \overline{\pi_n} ) \\
		& = & \Sf_j(\Sf_i(\pi)).
	\end{eqnarray*}
	This completes the proof. 
\end{proof}

The proof of the following Lemma is along the same lines as B{\'o}na's proof 
of \cite[Proposition 2.3]{bonaaltrun}.  We however give a proof for 
the sake of completeness. 

\begin{lemma}
\label{lem:onemorealtrunthananother}
Let $n \geq 3$ and $2 \leq i \leq n-1$. Then, one of $\pi$ and 
$\Sf_i(\pi)$ has exactly one more alternating run than the other. 
\end{lemma}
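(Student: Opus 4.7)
The strategy is to show that the sign-flip map $\Sf_i$ can affect the direction-change status at exactly two positions (and nowhere else), and then to use a parity argument to conclude that the net change in the direction-change count is exactly one.

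First, I would localize the effect of $\Sf_i$. Since $\Sf_i$ fixes $\pi_1, \ldots, \pi_{i-1}$ and negates $\pi_i, \ldots, \pi_n$, the direction-change status at any position $j$ (which depends only on the triple $\pi_{j-1}, \pi_j, \pi_{j+1}$) is preserved outside $\{i-1, i\}$: for $j \le i-2$ all three neighboring entries are unchanged, while for $j \ge i+1$ all three are negated, so both consecutive comparisons reverse and the direction-change status is retained.

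Next, I would introduce the three local signs $x = \mathrm{sgn}(\pi_{i-1} - \pi_{i-2})$ (with the convention $\pi_0 = 0$), $y = \mathrm{sgn}(\pi_i - \pi_{i-1})$, $z = \mathrm{sgn}(\pi_{i+1} - \pi_i)$, together with $y' = \mathrm{sgn}(-\pi_i - \pi_{i-1})$, all lying in $\{\pm 1\}$. The joint contribution of positions $i-1$ and $i$ to the direction-change count is $N = [x \ne y] + [y \ne z]$ before the flip and $N' = [x \ne y'] + [y' \ne -z]$ after. Using the elementary identity $[a \ne b] + [b \ne c] \equiv [a \ne c] \pmod{2}$, I would observe that $N \equiv [x \ne z]$ while $N' \equiv [x \ne -z] = [x = z] \pmod{2}$, so $N$ and $N'$ have opposite parities. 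Since $N, N' \in \{0, 1, 2\}$, this forces $|N - N'| = 1$, and combining with the localization step gives $|\altr_B(\pi) - \altr_B(\Sf_i(\pi))| = 1$, as required.

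The main obstacle is really the localization step. One must carefully verify the ``both comparisons reverse'' argument for positions $j \ge i+1$ and handle the boundary convention $\pi_0 = 0$ when $i = 2$ (so that position $i-1 = 1$ still gets a well-defined left-comparison sign $x$). Once the localization is in hand, the parity identity neatly bypasses the sixteen-case enumeration on $(x, y, z, \mathrm{sgn}(\pi_{i-1} + \pi_i))$ that a direct case check would otherwise require.
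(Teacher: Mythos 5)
Your argument is correct, and while it shares the paper's localization step, it replaces the paper's core argument with a genuinely different one. Both proofs first observe that the direction-change status is unaffected at positions $j\le i-2$ (all three relevant entries unchanged) and at positions $j\ge i+1$ (all three negated, so both comparisons reverse), so only positions $i-1$ and $i$ matter; the paper phrases this as reducing to the four-element window $\pi_{i-2},\pi_{i-1},\pi_i,\pi_{i+1}$. From there the paper enumerates eight cases according to the three comparison signs (each splitting into two subcases for the sign of $\overline{\pi_i}-\pi_{i-1}$) and checks directly that the run count changes by one in every case. You instead encode the two affected positions by Iverson brackets in the comparison signs $x,y,z$ and $y'$, invoke the telescoping identity $[a\ne b]+[b\ne c]\equiv[a\ne c]\pmod 2$ to see that the local contributions $N$ and $N'$ have opposite parities, and conclude $|N-N'|=1$ since both lie in $\{0,1,2\}$. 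This is slicker and explains \emph{why} the answer is always exactly one (the middle sign $y$ versus $y'$ cancels out mod $2$, while the outer signs $x$ and $-z$ force a parity flip), at the cost of a small amount of bookkeeping to justify that all four signs are well defined (entries of a signed permutation have distinct absolute values, and $\pi_0=0\ne\pi_1$ handles the boundary case $i=2$). The paper's case check is more pedestrian but entirely self-contained. Both are complete proofs.
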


\begin{proof}
Let $\pi= \pi_1, \dots, \pi_{i-2}, \pi_{i-1}, \pi_i, \pi_{i+1} 
\dots, \pi_n$.  Clearly, 
$$\Sf_i(\pi)= \pi_1, \dots, \pi_{i-2}, \pi_{i-1}, \overline{\pi_i}, \overline{\pi_{i+1}}, \dots, \overline{\pi_n}.$$
The number of alternating runs in the string 
$\pi_i,\pi_{i+1},\dots,\pi_n$ and its image under $\Sf_i$, that is in
the string $\overline{\pi_i}, \overline{\pi_{i+1}}, \dots, \overline{\pi_n}$ 
are clearly identical. Therefore, we only need to consider the
changes in the four-element string $\pi_{i-2},\pi_{i-1},\pi_i,\pi_{i+1}$ and 
$\pi_{i-2},\pi_{i-1}, \overline{\pi_i}, \overline{\pi_{i+1}}.$ 
We break them into $2^3=8$ cases as follows:
\begin{enumerate}
	
	\item  If $\pi_{i-2} < \pi_{i-1} < \pi_i < \pi_{i+1}$, then either $\pi_{i-2} < \pi_{i-1} < \overline{ \pi_i} > \overline{\pi_{i+1}}$ or $\pi_{i-2} < \pi_{i-1} > \overline{ \pi_i} > \overline{\pi_{i+1}}$.
	
	\item If $\pi_{i-2} < \pi_{i-1} < \pi_i > \pi_{i+1}$, then either $\pi_{i-2} < \pi_{i-1} < \overline{ \pi_i} < \overline{\pi_{i+1}}$ or $\pi_{i-2} < \pi_{i-1} > \overline{ \pi_i} < \overline{\pi_{i+1}}$.
	
	\item If $\pi_{i-2} < \pi_{i-1} > \pi_i < \pi_{i+1}$, then either $\pi_{i-2} < \pi_{i-1} > \overline{ \pi_i} > \overline{\pi_{i+1}}$ or $\pi_{i-2} < \pi_{i-1} < \overline{ \pi_i} > \overline{\pi_{i+1}}$.
	
	\item If $\pi_{i-2} < \pi_{i-1} > \pi_i > \pi_{i+1}$, then either $\pi_{i-2} < \pi_{i-1} < \overline{ \pi_i} < \overline{\pi_{i+1}}$ or $\pi_{i-2} < \pi_{i-1} > \overline{ \pi_i} < \overline{\pi_{i+1}}$.
	
		\item If $\pi_{i-2} > \pi_{i-1} < \pi_i < \pi_{i+1}$, then either $\pi_{i-2} > \pi_{i-1} < \overline{ \pi_i} > \overline{\pi_{i+1}}$ or $\pi_{i-2} > \pi_{i-1} > \overline{ \pi_i} > \overline{\pi_{i+1}}$.
		
		\item If $\pi_{i-2} > \pi_{i-1} < \pi_i > \pi_{i+1}$, then either $\pi_{i-2} > \pi_{i-1} < \overline{ \pi_i} < \overline{\pi_{i+1}}$ or $\pi_{i-2} > \pi_{i-1} > \overline{ \pi_i} < \overline{\pi_{i+1}}$.
		
		\item If $\pi_{i-2} > \pi_{i-1} > \pi_i < \pi_{i+1}$, then either $\pi_{i-2} > \pi_{i-1} > \overline{ \pi_i} > \overline{\pi_{i+1}}$ or $\pi_{i-2} > \pi_{i-1} < \overline{ \pi_i} > \overline{\pi_{i+1}}$.
		
		\item If $\pi_{i-2} > \pi_{i-1} > \pi_i > \pi_{i+1}$, then either $\pi_{i-2} > \pi_{i-1} < \overline{ \pi_i} < \overline{\pi_{i+1}}$ or $\pi_{i-2} > \pi_{i-1} > \overline{ \pi_i} < \overline{\pi_{i+1}}$.
	\end{enumerate}
In each of the eight cases above, the difference between
the number of alternating runs of the four-element string 
$\pi_{i-2},\pi_{i-1},\pi_i,\pi_{i+1}$ and 
$\pi_{i-2},\pi_{i-1}, \overline{\pi_i}, \overline{\pi_{i+1}}$ 
is exactly $1$. This completes the proof.   
 \end{proof}

\section{Proof of Theorem
\ref{thm:divisibilty_of_alternatingrunpolynomial_by_highpower_oneplust_b}}
\label{Sec:proofofmainthm}

Let $n \geq 3$ be an integer. When $n$ is even, let
$T_n= \{3,5,\dots,n-1\}$ and 
when $n$ is odd, let $T_n=\{2,4,\dots,n-1\}$. 
In both cases, let $M_n= \{ \Sf_i: i \in T_n \}.$
In either case, $M_n$ clearly consists of 
$m = \nmhalf$
pairwise commuting involutions.  Therefore, we get an action  
of the group $\ZZ_2^m$ on $\BB_n$ and $\DD_n$.  
For integers $n$ with $n \geq 3$, since $1 \not\in T_n$, 
we do not flip the sign of the first element and so $\ZZ_2^m$ 
also acts on $\BB_n^>$.  As the number of negative signs
is preserved, $\ZZ_2^m$ acts on $\DD_n^>$ as well.
With these set of generators, we note the following properties of 
the maps in $M_n$. 

\begin{lemma}
\label{lem:SfSfconsistency}
Let $i,j \in T_n$ with $i \neq j$ and suppose $\pi \in \BB_n$ 
is such that $\altr_B(\Sf_i(\pi))=\altr_B(\pi)+1$.
Then, $\altr_B(\Sf_j(\Sf_i(\pi))= \altr_B(\Sf_j(\pi))+1.$
\end{lemma}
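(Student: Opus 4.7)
The plan is to exhibit, via commutativity together with a local analysis of the change in $\altr_B$ under $\Sf_i$, that the quantity
$\Delta_i(\sigma) := \altr_B(\Sf_i(\sigma))-\altr_B(\sigma)$ (which is $\pm 1$ by Lemma \ref{lem:onemorealtrunthananother}) satisfies $\Delta_i(\Sf_j(\pi)) = \Delta_i(\pi)$.  Once this is established, the lemma follows at once: by Lemma \ref{lem:commutativity}, $\Sf_j(\Sf_i(\pi))=\Sf_i(\Sf_j(\pi))$, and so $\altr_B(\Sf_j(\Sf_i(\pi))) = \altr_B(\Sf_j(\pi)) + \Delta_i(\Sf_j(\pi)) = \altr_B(\Sf_j(\pi)) + \Delta_i(\pi) = \altr_B(\Sf_j(\pi))+1$, using the hypothesis $\Delta_i(\pi)=+1$.

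\noindent The first step I would carry out is to show that $\Delta_i(\pi)$ depends only on the four entries $\pi_{i-2},\pi_{i-1},\pi_i,\pi_{i+1}$ (with the convention $\pi_0=0$).  For this, I introduce the ascent/descent signs $s_k := \sign(\pi_k-\pi_{k-1})$ for $1\leq k\leq n$, so that $\altr_B(\pi)-1$ equals the number of indices $1\leq k\leq n-1$ with $s_k\neq s_{k+1}$.  A direct check shows that $\Sf_i$ preserves $s_k$ for $k\leq i-1$, flips $s_k$ for $k\geq i+1$, and replaces $s_i$ by $s_i':=\sign(-\pi_i-\pi_{i-1})$.  Hence only the comparisons at $k=i-1$ and $k=i$ can change, and one obtains the explicit formula $\Delta_i(\pi)=\bigl([s_{i-1}\neq s_i']-[s_{i-1}\neq s_i]\bigr)+\bigl([s_i'\neq -s_{i+1}]-[s_i\neq s_{i+1}]\bigr)$, which depends only on the four signs above and hence only on the four window entries.

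\noindent To finish the argument I would exploit that $i$ and $j$ both lie in $T_n$, so they share a parity and $|i-j|\geq 2$.  If $j>i$, then $j\geq i+2$ and $\Sf_j$ only alters entries at positions $\geq j\geq i+2$, so the four-element window is left untouched and the local dependence from the previous paragraph immediately yields $\Delta_i(\Sf_j(\pi))=\Delta_i(\pi)$.  If $j<i$, then $j\leq i-2$ and $\Sf_j$ simultaneously negates every entry of the window; consequently each of $s_{i-1},s_i,s_{i+1},s_i'$ changes sign, and the explicit formula for $\Delta_i$ is manifestly invariant under such a simultaneous sign change, because $[-a\neq -b]=[a\neq b]$ and $-(-s_{i+1})=s_{i+1}$.

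\noindent The main obstacle is the invariance under simultaneous negation in the second case above.  The sign-sequence reformulation reduces it to the trivial identity just mentioned; an alternative would be to pair up the eight local patterns appearing in the proof of Lemma \ref{lem:onemorealtrunthananother} with their pointwise negations and verify case by case that each pair produces the same value of $\Delta_i$, which works but is substantially more tedious and is precisely the kind of bookkeeping the parity condition on $T_n$ is engineered to avoid.
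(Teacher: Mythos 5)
Your proof is correct, and its core idea---that the change in $\altr_B$ under $\Sf_i$ is determined entirely by the four-entry window $\pi_{i-2},\pi_{i-1},\pi_i,\pi_{i+1}$---is the same locality observation on which the paper's proof rests. The genuine difference is in how the two arguments dispose of the ordering of $i$ and $j$. The paper declares ``without loss of generality, let $i<j$'' and then only treats the case where $\Sf_j$ acts strictly to the right of the window, so that the window of $\Sf_j(\pi)$ coincides with that of $\pi$. But the statement is not symmetric in $i$ and $j$ (the hypothesis constrains $\Sf_i$, the conclusion compares $\Sf_j\circ\Sf_i$ with $\Sf_j$), so the case $j<i$ does not follow by relabelling; indeed it is exactly the case $j<i$ that is invoked in the proof of Lemma \ref{lem:generating_function_in_orbit}, where the hypothesis is $\altr_B(\Sf_j(\pi'))=\altr_B(\pi')+1$ and the conclusion concerns $\Sf_i(\Sf_j(\pi'))$ with $i<j$. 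Your argument fills this in: when $j\le i-2$, the map $\Sf_j$ negates every entry of the window, and your explicit formula for the local increment in terms of the signs $s_{i-1},s_i,s_{i+1}$ and $s_i'=\sign(-\pi_i-\pi_{i-1})$ is manifestly invariant under negating all four of these simultaneously. The parity condition built into $T_n$, which guarantees $|i-j|\ge 2$, is also used correctly in both branches. In short, your write-up proves strictly more carefully what the paper asserts, at the cost of introducing the sign-sequence bookkeeping; the paper's version is shorter but leaves the reader to supply precisely the negation-invariance step you make explicit.
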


\begin{proof}
Without loss of generality, let $i < j$. Let
$\pi= \pi_1, \dots, \pi_{i-2}, \pi_{i-1}, \pi_i, \pi_{i+1},  
\dots, \pi_j, \dots ,\pi_n$. We have 
\begin{eqnarray*}
\Sf_i(\pi) & = & \pi_1, \dots, \pi_{i-2}, \pi_{i-1}, 
\overline{\pi_i}, \overline{\pi_{i+1}}, \dots, \overline{\pi_{j-1}}, 
\overline{\pi_j}, \dots,  \overline{\pi_n}, \\
\Sf_j(\pi) & = & \pi_{1}, \dots,  \pi_{i-2}, \pi_{i-1}, \pi_i, 
\pi_{i+1},\dots, \pi_{j-1}, \overline{\pi_j}, \dots,  \overline{\pi_n}, \\
\Sf_j(\Sf_i(\pi)) & = & \pi_1, \dots, \pi_{i-2}, \pi_{i-1}, \overline{\pi_i}, \overline{\pi_{i+1}}, \dots, \overline{\pi_{j-1}}, \pi_j, \dots,  \pi_n.
\end{eqnarray*} 
As $\altr_B(\Sf_i(\pi))=\altr_B(\pi)+1$,
the string $\pi_{i-2}, \pi_{i-1}, \overline{\pi_i}, \overline{\pi_{i+1}}$ 
has one more alternating run than the string
$\pi_{i-2}, \pi_{i-1}, \pi_i, \pi_{i+1}.$
Thus,  we get 
$\altr_B(\Sf_j(\Sf_i(\pi)))= \altr_B(\Sf_j(\pi)) +1$. The proof is complete. 
\end{proof}

The action of $\ZZ_2^m$ on $\BB_n^>$ and $\DD_n^>$ clearly creates
orbits of size $2^m$.

\begin{lemma}
\label{lem:generating_function_in_orbit}
Let $O$ be any orbit of $\ZZ_2^m$ acting on $\BB_n^>$.
Then, for a non-negative integer $a$,  we have
$$ \sum_{\pi \in O} t^{\altr_B(\pi)} =  t^a(1+t)^m. $$ 
\end{lemma}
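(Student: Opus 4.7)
The plan is to fix a base point in $O$ and show that the elements of the orbit are distributed by alternating run count according to the sizes of subsets of $T_n$. Choose any $\pi^* \in O$ achieving $a := \min\{\altr_B(\pi) : \pi \in O\}$. For $S \subseteq T_n$ write $\Sf_S(\pi^*) := \bigl(\prod_{i \in S}\Sf_i\bigr)(\pi^*)$; this is well defined by Lemma \ref{lem:commutativity}. Since $|O| = 2^m$, the map $S \mapsto \Sf_S(\pi^*)$ is a bijection from the power set of $T_n$ onto $O$, so it will suffice to prove
\[
\altr_B(\Sf_S(\pi^*)) = a + |S| \qquad \text{for every } S \subseteq T_n,
\]
after which summing yields $\sum_{\pi \in O} t^{\altr_B(\pi)} = \sum_{S \subseteq T_n} t^{a+|S|} = t^a(1+t)^m$.

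To set up the induction, for each $\pi \in O$ and each $i \in T_n$ define the sign $\epsilon_i(\pi) \in \{+1,-1\}$ by $\altr_B(\Sf_i(\pi)) = \altr_B(\pi) + \epsilon_i(\pi)$; this is legitimate by Lemma \ref{lem:onemorealtrunthananother}. The central intermediate claim I would establish is the invariance
\[
\epsilon_i(\Sf_j(\pi)) = \epsilon_i(\pi) \qquad \text{for all distinct } i, j \in T_n.
\]
The direction $\epsilon_i(\pi) = +1 \Rightarrow \epsilon_i(\Sf_j(\pi)) = +1$ is essentially a restatement of Lemma \ref{lem:SfSfconsistency}. For the $-1$ case I would start from $\epsilon_i(\pi) = -1$, observe that $\epsilon_i(\Sf_i(\pi)) = +1$ since $\Sf_i$ is an involution, apply Lemma \ref{lem:SfSfconsistency} with input $\Sf_i(\pi)$ and generator $\Sf_j$, and then invoke the commutativity $\Sf_i \Sf_j = \Sf_j \Sf_i$ of Lemma \ref{lem:commutativity} to transport the conclusion back to $\Sf_j(\pi)$.

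With the invariance in hand, the displayed equality follows by induction on $|S|$. The minimality of $\altr_B(\pi^*)$ combined with the $\pm 1$ change of Lemma \ref{lem:onemorealtrunthananother} forces $\epsilon_i(\pi^*) = +1$ for every $i \in T_n$. For the inductive step, assume $\altr_B(\Sf_S(\pi^*)) = a + |S|$ and pick $j \in T_n \setminus S$; iterating the invariance across the generators that build $\Sf_S$ gives $\epsilon_j(\Sf_S(\pi^*)) = \epsilon_j(\pi^*) = +1$, and hence $\altr_B(\Sf_{S \cup \{j\}}(\pi^*)) = a + |S| + 1$, closing the induction.

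The only delicate step is the invariance $\epsilon_i(\Sf_j(\pi)) = \epsilon_i(\pi)$ for $i \neq j$; everything else is bookkeeping for the free $\ZZ_2^m$-action on $O$. I expect that to be the main obstacle, but it should fall out cleanly from a short chain of applications of Lemmas \ref{lem:commutativity}, \ref{lem:onemorealtrunthananother}, and \ref{lem:SfSfconsistency}.
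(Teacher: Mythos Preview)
Your proposal is correct and follows essentially the same approach as the paper's proof: pick an orbit element $\pi^*$ of minimum $\altr_B$, use minimality together with Lemma~\ref{lem:onemorealtrunthananother} to get $\epsilon_i(\pi^*)=+1$ for each $i\in T_n$, and then use Lemma~\ref{lem:SfSfconsistency} to propagate so that applying any $k$ distinct generators raises the run count by exactly $k$. Your explicit invariance statement $\epsilon_i(\Sf_j(\pi))=\epsilon_i(\pi)$ and the induction on $|S|$ make rigorous what the paper compresses into the phrase ``Continuing this way,'' but the underlying argument is the same.
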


\begin{proof}
Let $\pi'$ be a permutation in the orbit $O$ with the minimum 
number of alternating runs, say $a$ many. Then for $i \in T_n$, 
we have $\altr_B(\Sf_i(\pi'))= \altr_B(\pi')+1$. 
For $i,j \in T_n$ with $i < j$, by Lemma \ref{lem:SfSfconsistency}, 
$\altr_B(\Sf_i(\Sf_j(\pi')))  =  \altr_B(\Sf_i(\pi'))+1= 
\altr_B(\pi')+2$. Therefore, there 
are $\binom{m}{2}$ permutations  in $O$ with $a+2$ alternating runs. 
Continuing this way, we get 
$\altr_B(\Sf_{i_1}(\Sf_{i_2}( \dots (\Sf_{i_k}(\pi'))))= \altr_B(\pi')+k$ 
for $i_1 < i_2 < \dots  < i_k$, where $i_1, i_2, \dots, i_k \in T_n$.
Clearly, there are $\binom{m}{k}$ many such elements. 
Summing over $k$, completes the proof. 
\end{proof}

We are now in a position to prove Theorem 
\ref{thm:divisibilty_of_alternatingrunpolynomial_by_highpower_oneplust_b}.

{\bf Proof of Theorem
\ref{thm:divisibilty_of_alternatingrunpolynomial_by_highpower_oneplust_b} :}
When $n \leq 2$, the exponent of $(1+t)$ is 0 and so there is nothing to prove.
Thus, let $n \geq 3$.   We first show the result for the type B case.
We write $\BB_n^>$ as a disjoint union of its orbits and 
apply Lemma \ref{lem:generating_function_in_orbit}.  Doing this, we  get
$R_n^{B,>}(t) = (1 + t)^m \sum_{\pi} t^ {\altr_B(\pi)} ,$
where the summation is over permutations $\pi \in \BB_n^>$ that have the
minimum $\altr_B(\pi)$ value in its orbit. 
This completes the proof for the type B case.

For the type D case, we make the same moves, while making the
following observation:  by our choice of the set $T_n$, we
assert that for each $i \in T_n$, we have 
$|\Negs(\pi)| \equiv |\Negs(\Sf_i(\pi))|$ (mod 2).  Therefore, all 
permutations $\pi$ of any orbit have the same value of 
$|\Negs(\pi)|$ (mod 2).   Thus, an orbit lies entirely in $\DD_n^>$ or  
entirely in $\BB_n^> - \DD_n^>.$  Decomposing $\DD_n^>$ and $\BB_n^> - \DD_n^>$ 
into orbits $O$ and summing $t^{\altr_D(\pi)}$ (and $t^{\altr_B(\pi)}$
respectively) over each $O$ completes the proof of the other two results.
{\hspace*{\fill}{\eod}}

\section{Refining Theorem 
\ref{thm:divisibilty_of_alternatingrunpolynomial_by_highpower_oneplust_b}
by taking parity into account}
\label{sec:applns}

As $\BB_n$ and $\DD_n$ are Coxeter groups, there is a natural notion of 
length, denoted by $\inv_B$ and $\inv_D$ respectively, associated to them. 
We first consider the type B case.  The following combinatorial 
definition is from Petersen's book 
\cite[Page 294]{petersen-eulerian-nos-book}:
\begin{equation}
\label{eqn:typeb-inv-defn}
\inv_B(\pi)=  | \{ 1 \leq i < j \leq n : \pi_i > \pi_j \} |+  
| \{ 1 \leq i < j \leq n : -\pi_i > \pi_j \}| +|\Negs(\pi)|.  
\end{equation}
For $\pi \in \BB_n$, we refer to $\inv_B(\pi)$ alternatively 
as its {\it length}.   Let $\BB^+_n \subseteq \BB_n$ 
denote the subset of even length elements of $\BB_n$ and 
let $\BB_n^- = \BB_n - \BB_n^+$.
Here, we use the notation $\BB_n^{\pm}$ to succintly denote 
both $\BB_n^+$ and $\BB_n^-$.  Similarly, we use terms like
$R_{n,k}^{B,>,\pm}$, 
and $R_{n,k}^{B,<,\pm}$ for brevity. 
%
%
Define the following polynomials enumerating $\altr_B$ over
various sets.
\begin{equation*}
	R_n^{B,>,\pm} (t)  =  \sum _{\pi \in \BB_n^{>,\pm}} 
t^ {\altr_B(\pi)}= \sum_{i=1}^n R_{n,i}^{B,>,\pm} t^i, \hspace{3 mm}
	R_n^{B,<,\pm} (t)  =  \sum _{\pi \in \BB_n^{<,\pm}} 
t^ {\altr_B(\pi)}=\sum_{i=1}^n R_{n,i}^{B,<,\pm} t^i.
\end{equation*}

Recall the sets $T_n$ from Section \ref{Sec:proofofmainthm}. We first 
show the following.  

\begin{lemma}
\label{lem:invb_preserved_under_action}
For positive integers $n$, and $\pi \in \BB_n$, we have 
$\inv_B(\Sf_k(\pi)) \equiv \inv_B(\pi) $ (mod 2) 
whenever $k \in T_n$.  
\end{lemma}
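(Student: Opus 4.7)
The plan is to analyze how each of the three summands on the right-hand side of the definition \eqref{eqn:typeb-inv-defn} of $\inv_B$ changes under $\Sf_k$ for $k \in T_n$, and show that each changes by an even amount.

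First I would record the key arithmetic fact about $T_n$: the number $r = n - k + 1$ of coordinates whose sign is flipped is even for every $k \in T_n$. Indeed, when $n$ is even, $T_n$ consists of odd integers, and when $n$ is odd, $T_n$ consists of even integers, so in both cases $n-k$ is odd and $r$ is even. Letting $p$ (respectively $q$) denote the number of positive (respectively negative) entries among $\pi_k, \pi_{k+1}, \ldots, \pi_n$, we have $p + q = r$ even, and $|\Negs(\Sf_k(\pi))| - |\Negs(\pi)| = p - q$ is therefore even as well. This handles the third summand of $\inv_B$.

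For the first two summands I would partition the pairs $(i, j)$ with $1 \leq i < j \leq n$ into three classes based on where they sit relative to $k$. Pairs with both $i, j < k$ are unaffected by $\Sf_k$. For a pair with $i < k \leq j$, only $\pi_j$ is flipped, so the combined contribution to the first two summands is $\mathbf{1}[\pi_i > \pi_j] + \mathbf{1}[\pi_i + \pi_j < 0]$ before and $\mathbf{1}[\pi_i + \pi_j > 0] + \mathbf{1}[\pi_j > \pi_i]$ after, where $\mathbf{1}[\cdot]$ denotes the indicator of the given event. Since the absolute values in a signed permutation are distinct, $\pi_i \neq \pi_j$ and $\pi_i + \pi_j \neq 0$, so the four indicators split into two complementary pairs each summing to $1$, and the before and after contributions sum to $2$; hence the change is even. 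The class $k \leq i < j$ is handled identically using $\sigma_i = -\pi_i$ and $\sigma_j = -\pi_j$, leading to the same even-change conclusion.

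Summing the pairwise contributions for the first two summands and combining with the $|\Negs|$ computation gives $\inv_B(\Sf_k(\pi)) \equiv \inv_B(\pi) \ppmod{2}$. I do not foresee any real obstacle beyond keeping the case analysis organized; the one subtle point worth highlighting is that the parity of $r$ is used only to control the $|\Negs|$ term, while the first two summands change by an even amount for every $k$, so the restriction $k \in T_n$ is precisely what is required.
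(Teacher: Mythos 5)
Your proof is correct, and it takes a somewhat different (more self-contained) route than the paper. The paper's proof observes that for $k \in T_n$ the map $\Sf_k$ flips an even number of signs and then cites an external result (\cite[Lemma 3]{siva-sgn_exc_hyp}) asserting that flipping a single sign reverses the parity of $\inv_B$; evenness of the number of flips then finishes the argument. You instead analyze the suffix flip all at once, splitting the pairs $(i,j)$ according to their position relative to $k$ and checking that the combined contribution of a pair to the two inversion-type summands of \eqref{eqn:typeb-inv-defn} before and after the flip always totals $2$ (using $|\pi_i|\neq|\pi_j|$ to rule out ties), so that part changes by an even amount for \emph{every} $k$; the evenness of $r=n-k+1$ for $k\in T_n$ is then used only to control $|\Negs|$. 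Your verification of the parity of $r$ in both the $n$ even and $n$ odd cases is correct, and your observation that the membership $k\in T_n$ is needed only for the $|\Negs|$ term is a genuine refinement of what the paper records. What your approach buys is independence from the cited lemma (indeed, your pairwise computation specializes to give a proof of that lemma); what the paper's approach buys is brevity, since decomposing $\Sf_k$ into single sign flips lets it quote a one-line parity fact. Both proofs are valid.
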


\begin{proof}
Observe that for any positive integer $n$, our choice of $T_n$ 
ensures the following: for all $k \in T_n$ and $\pi \in \BB_n$, the 
map $\Sf_k$ flips the sign of an even number of elements $\pi_i$'s
of $\pi$.  Flipping the sign of 
a single $\pi_i$ changes the parity $\inv_B(\pi)$
(see \cite[Lemma 3]{siva-sgn_exc_hyp}).  Thus, 
flipping the sign of an even number of $\pi_i$'s 
preserve the parity of $\inv_B(\pi)$. This completes the proof. 
\end{proof}

\begin{theorem}
\label{thm:dbl_rstrct_div_b}
For positive integers $n$, the polynomials $R_n^{B,>,\pm} (t)$ and
$R_n^{B,<,\pm} (t)$ 
are divisible by $(1+t) ^{m}$ where $m= \nmhalf$.
\end{theorem}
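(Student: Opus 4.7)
The plan is to mirror the orbit decomposition argument used in Section \ref{Sec:proofofmainthm} for $R_n^{B,>}(t)$, but now refining it according to the parity of $\inv_B$. Recall that the group $\ZZ_2^m$, generated by the pairwise commuting involutions $\{\Sf_i : i \in T_n\}$, acts on $\BB_n$ with orbits of size $2^m$. Since $1 \notin T_n$ by construction, none of the generators flips the sign of $\pi_1$, so the action restricts to $\BB_n^>$ and also to $\BB_n^<$ separately.

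The key new ingredient is Lemma \ref{lem:invb_preserved_under_action}, which says $\inv_B(\Sf_k(\pi)) \equiv \inv_B(\pi) \pmod 2$ for every $k \in T_n$. Consequently, the parity of $\inv_B$ is constant on every $\ZZ_2^m$-orbit. Combined with the sign-of-$\pi_1$ preservation, this means each orbit lies entirely in one of the four sets $\BB_n^{>,+}$, $\BB_n^{>,-}$, $\BB_n^{<,+}$, $\BB_n^{<,-}$. Hence $\BB_n^{>,\pm}$ and $\BB_n^{<,\pm}$ each decompose as a disjoint union of $\ZZ_2^m$-orbits inherited from the action on $\BB_n$.

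Now I would invoke Lemma \ref{lem:generating_function_in_orbit}, whose proof never used anything beyond the generic orbit structure: for any such orbit $O$ contained in $\BB_n^{>,\pm}$ (or $\BB_n^{<,\pm}$), we have
\[
\sum_{\pi \in O} t^{\altr_B(\pi)} = t^{a_O}(1+t)^m,
\]
where $a_O$ is the minimum number of alternating runs attained in $O$. Summing over all orbits in $\BB_n^{>,\pm}$ yields
\[
R_n^{B,>,\pm}(t) = (1+t)^m \sum_{O} t^{a_O},
\]
where $O$ ranges over orbits contained in $\BB_n^{>,\pm}$, and similarly for $R_n^{B,<,\pm}(t)$. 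Both polynomials are therefore divisible by $(1+t)^m$ with $m = \nmhalf$.

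There is no real obstacle here: all the work has been done in Section \ref{Sec:proofofmainthm} and Lemma \ref{lem:invb_preserved_under_action}. The only thing one needs to verify carefully is the claim that orbits respect the $\pm$ partition, which is exactly the content of Lemma \ref{lem:invb_preserved_under_action}; everything else is a direct reuse of the orbit-summation template already established.
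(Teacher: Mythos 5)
Your proposal is correct and follows essentially the same route as the paper: both use Lemma \ref{lem:invb_preserved_under_action} to conclude that each $\ZZ_2^m$-orbit lies entirely within one of the sets $\BB_n^{>,\pm}$ or $\BB_n^{<,\pm}$, and then decompose each set into orbits and apply the orbit-sum formula of Lemma \ref{lem:generating_function_in_orbit}. No substantive differences.
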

\begin{proof}
We first consider the polynomials $R_n^{B,>,+} (t)$. By 
Lemma \ref{lem:invb_preserved_under_action}, all permutations $\pi$ 
in an orbit have the same value of $\inv_B(\pi)$ (mod 2).
Thus, any orbit lies entirely in $\BB_n^{>,+}$ or entirely 
outside $\BB_n^{>,+}$.  Decomposing  $\BB_n^{>,+}$ as a disjoint
union of orbits $O$ and summing $t^{\altr_B(\pi)}$  over 
each $O$ proves that $R_n^{B,>,+} (t)$ is divisible by 
$(1+t) ^{m}$. In an identical manner, one can prove that 
$R_n^{B,>,-} (t)$ and $R_n^{B,<,\pm} (t)$ 
are divisible by $(1+t) ^{m}$.
\end{proof}

We need the following lemma which appears in the proof of a 
result of Chow and Ma 
\cite[Corollary 2]{chow-ma-counting-signed-perms-alt-runs}.  
We have paraphrased their
result, but from their proof, this change of form will be clear.

\begin{lemma}[Chow and Ma]
\label{lem:chow-ma-powers-alt}
If $(1+t)^m$ divides $f(t) = \sum_{i=0}^n f_i t^i$, then
for positive integers $k \leq m-1$, we have
$$1^k f_1 + 3^k f_3 + \cdots = 2^k f_2 + 4^k f_4 + \cdots.$$
\end{lemma}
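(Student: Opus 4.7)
The plan is to reinterpret the claimed identity $1^k f_1 + 3^k f_3 + \cdots = 2^k f_2 + 4^k f_4 + \cdots$ as the vanishing of a single polynomial quantity at $t=-1$. Introducing the Euler operator $D = t\tfrac{d}{dt}$ and noting that $D^k f(t) = \sum_i i^k f_i t^i$, the value $D^k f(-1) = \sum_i i^k f_i (-1)^i$ is exactly the difference between the right and left hand sides of the desired identity, once we observe that for $k \geq 1$ the $i = 0$ contribution drops out because $0^k = 0$. So the lemma reduces to the single claim that $D^k f(-1) = 0$ for every $k$ in the range $1 \leq k \leq m-1$.

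To obtain this, I would establish the following divisibility preservation statement: if a polynomial $h(t)$ is divisible by $(1+t)^{\ell}$ with $\ell \geq 1$, then $Dh(t)$ is divisible by $(1+t)^{\ell - 1}$. The verification is a short computation; writing $h(t) = (1+t)^{\ell} g(t)$, the product rule gives
$$Dh(t) = t(1+t)^{\ell - 1}\bigl[\ell\, g(t) + (1+t)\, g'(t)\bigr],$$
which exhibits the required factor. Iterating this observation $k$ times, starting from the hypothesis $(1+t)^m \mid f(t)$, yields $(1+t)^{m-k} \mid D^k f(t)$.

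Finally, whenever $k \leq m-1$, the exponent $m-k$ is at least $1$, so $D^k f(t)$ vanishes at $t = -1$. Combined with the first paragraph this gives $\sum_{i \text{ odd}} i^k f_i = \sum_{i \text{ even},\, i \geq 2} i^k f_i$, which is the lemma. There is no real obstacle here; the only point requiring attention is the bookkeeping that $i = 0$ contributes nothing on the right once $k \geq 1$, so that the identity $D^k f(-1) = 0$ matches the statement exactly.
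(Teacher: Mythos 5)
Your proof is correct. The paper itself states this lemma without proof, citing Chow and Ma; the argument underlying their proof is exactly the one you give --- apply the operator $t\,\frac{d}{dt}$ repeatedly, note that each application costs one factor of $(1+t)$, and evaluate at $t=-1$ --- so your write-up simply supplies the standard proof the paper omits, with the bookkeeping (the vanishing $i=0$ term, the requirement $m-k\geq 1$) handled correctly.
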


From Lemma \ref{lem:chow-ma-powers-alt} and Theorem 
\ref{thm:dbl_rstrct_div_b}, we immediately get the 
following moment-type identity.  This refines a similar
identity involving $R_{n,k}^B$ and $R_{n,k}^D$ obtained
by combining Lemma 
\ref{lem:chow-ma-powers-alt} and Remark
\ref{rem:firstnegativeequalsfirstpositive-dtype}.

\begin{theorem}
\label{thm:moment_id_dbl_rstrct_b}
For $n \geq 2k+3$, we have
\begin{eqnarray*}
1^kR_{n,1}^{B,>, \pm}+3^kR_{n,3}^{B,>, \pm }+5^kR_{n,5}^{B,>,\pm}+\cdots 
& = & 
2^kR_{n,2}^{B,>,\pm}+4^kR_{n,4}^{B,>,\pm}+6^kR_{n,6}^{B,>,\pm}+\cdots, \\
1^kR_{n,1}^{B,<, \pm}+3^kR_{n,3}^{B,<, \pm }+5^kR_{n,5}^{B,<,\pm}+\cdots 
& = & 
2^kR_{n,2}^{B,<,\pm}+4^kR_{n,4}^{B,<,\pm}+6^kR_{n,6}^{B,<,\pm}+\cdots.
\end{eqnarray*}
\end{theorem}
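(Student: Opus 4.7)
The plan is to deduce Theorem \ref{thm:moment_id_dbl_rstrct_b} as a direct consequence of Theorem \ref{thm:dbl_rstrct_div_b} combined with Lemma \ref{lem:chow-ma-powers-alt}. The identities being claimed are exactly the moment-type identities that Lemma \ref{lem:chow-ma-powers-alt} produces when a polynomial is divisible by a sufficiently high power of $(1+t)$, and Theorem \ref{thm:dbl_rstrct_div_b} supplies precisely such divisibility for the four polynomials $R_n^{B,>,\pm}(t)$ and $R_n^{B,<,\pm}(t)$.

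First, I would apply Theorem \ref{thm:dbl_rstrct_div_b} to each of the four polynomials in turn to conclude that $(1+t)^m$ divides them, where $m = \lfloor (n-1)/2 \rfloor$. Next, for each such polynomial $f(t) = \sum_i R_{n,i}^{B,\cdot,\pm} t^i$, I would apply Lemma \ref{lem:chow-ma-powers-alt}, which requires the exponent $k$ appearing in the moment identity to satisfy $k \leq m - 1$. This gives the desired equation for each of the four polynomials, and summing nothing further is needed — the identities are precisely the statements in the theorem once the coefficients are rewritten in the notation $R_{n,i}^{B,>,\pm}$ and $R_{n,i}^{B,<,\pm}$.

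The last step is a small bookkeeping verification that the hypothesis $n \geq 2k+3$ is exactly what is needed so that $k \leq m - 1$. If $n$ is odd, then $m = (n-1)/2$, and $k \leq m-1$ becomes $n \geq 2k+3$. If $n$ is even, then $m = (n-2)/2$, and $k \leq m - 1$ becomes $n \geq 2k+4$; but $n \geq 2k+3$ together with $n$ even forces $n \geq 2k+4$ (since $2k+3$ is odd). Thus in both parities the hypothesis $n \geq 2k+3$ suffices to invoke Lemma \ref{lem:chow-ma-powers-alt}.

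There is essentially no obstacle here beyond this parity check — the theorem is a formal corollary of the two previously established results. If anything, the only point worth stating carefully in the write-up is that Lemma \ref{lem:chow-ma-powers-alt} is applied separately to each of the four polynomials (not to any combination of them), so that each of the two displayed moment identities is obtained by two independent applications (one for $+$ and one for $-$), each requiring only the divisibility guaranteed by Theorem \ref{thm:dbl_rstrct_div_b}.
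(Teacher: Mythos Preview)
Your proposal is correct and follows exactly the paper's approach: the theorem is stated as an immediate consequence of Theorem \ref{thm:dbl_rstrct_div_b} together with Lemma \ref{lem:chow-ma-powers-alt}. Your parity check verifying that $n \geq 2k+3$ ensures $k \leq m-1$ is a useful detail that the paper leaves implicit.
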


We now move on to the type D case.  The following definition
for $\inv_D$ is from Petersen's book 
\cite[Page 302]{petersen-eulerian-nos-book}.
\begin{equation}
\label{eqn:inv-dtype-defn}
\inv_D(\pi) = \inv(\pi) + | \{1 \leq i < j 
\leq n:  -\pi_i > \pi_j \}|. 
\end{equation}
where $\inv(\pi) = |\{1 \leq i < j \leq n: \pi_i > \pi_j \}|$.
Let $\DD_n^+ = 
\{\pi \in \DD_n: \inv_D(\pi) \> \mbox{ is even} \}$  
and let $\DD_n^- = \DD_n - \DD_n^+$. 
For $\pi \in \DD_n$, recall that $\altr_D(\pi) = \altr_B(\pi)$.
Further, define the following polynomials 
enumerating $\altr_D$ over various sets.
\begin{equation*}
R_n^{D,>,\pm} (t)  = \sum _{\pi \in \DD_n^{>,\pm}} 
t^ {\altr_D(\pi)}= \sum_{i=1}^n R_{n,i}^{D,>,\pm} t^i,  \hspace{3 mm}
R_n^{D,<,\pm} (t)  = \sum _{\pi \in \DD_n^{<,\pm}} 
t^ {\altr_D(\pi)}=\sum_{i=1}^n R_{n,i}^{D,<,\pm} t^i.
\end{equation*}

Recall the sets $(\BB_n - \DD_n)^{>,\pm}$ and 
$(\BB_n-\DD_n)^{<.\pm}$
Lastly, define the appropriate polynomials 
enumerating $\altr_D$ over these sets.
\begin{eqnarray*}
R_n^{B-D,>,\pm} (t) & = & \sum _{\pi \in (\BB_n - \DD_n)^{>,\pm} } 
t^ {\altr_D(\pi)}= \sum_{i=1}^n R_{n,i}^{B-D,>,\pm} t^i,  \\
R_n^{B-D,<,\pm} (t) & = & \sum _{\pi \in (\BB_n - \DD_n)^{<,\pm}} 
t^ {\altr_D(\pi)}=\sum_{i=1}^n R_{n,i}^{B-D,<,\pm} t^i. 
 \end{eqnarray*}
We start by proving the following type D counterpart of Lemma
\ref{lem:invb_preserved_under_action}.
\begin{lemma}
\label{lem:invd_preserved_under_action}
For positive integers $n$, and $\pi \in \DD_n$, we have 
$\inv_D(\Sf_k(\pi)) \equiv \inv_D(\pi) $ (mod 2) whenever $k \in T_n$.  
\end{lemma}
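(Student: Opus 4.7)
The plan is to deduce this lemma directly from its type B counterpart, Lemma \ref{lem:invb_preserved_under_action}, by exploiting the simple relation between $\inv_B$ and $\inv_D$. Comparing the definitions \eqref{eqn:typeb-inv-defn} and \eqref{eqn:inv-dtype-defn}, we observe that for every $\pi \in \BB_n$,
$$\inv_B(\pi) \;=\; \inv_D(\pi) + |\Negs(\pi)|.$$
Thus, working modulo $2$, the two length functions differ only by the parity of $|\Negs(\pi)|$.

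First, I would record that for any $k \in T_n$, the map $\Sf_k$ flips the signs of the last $n-k+1$ entries of $\pi$. By the definition of $T_n$ (odd indices in $\{3,\ldots,n-1\}$ when $n$ is even, even indices in $\{2,\ldots,n-1\}$ when $n$ is odd), this count $n-k+1$ is always even. In particular, $\Sf_k$ preserves the parity of $|\Negs(\pi)|$, so it restricts to an involution on $\DD_n$; for any $\pi \in \DD_n$, both $|\Negs(\pi)|$ and $|\Negs(\Sf_k(\pi))|$ are even.

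Next, I would apply Lemma \ref{lem:invb_preserved_under_action} to obtain $\inv_B(\Sf_k(\pi)) \equiv \inv_B(\pi) \pmod{2}$. Substituting the identity $\inv_B = \inv_D + |\Negs|$ into both sides, the $|\Negs|$ contributions vanish modulo $2$ by the previous paragraph, and we conclude $\inv_D(\Sf_k(\pi)) \equiv \inv_D(\pi) \pmod{2}$, as required. There is no genuine obstacle here: the lemma is essentially a reformulation of Lemma \ref{lem:invb_preserved_under_action} through the $\inv_B$--$\inv_D$ relation, and the only point meriting verification is that $\Sf_k$ leaves $\DD_n$ invariant for $k \in T_n$, which is the even sign-flip observation above.
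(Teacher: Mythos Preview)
Your proof is correct and follows essentially the same route as the paper: both use the identity $\inv_B(\pi)=\inv_D(\pi)+|\Negs(\pi)|$ to reduce the statement to Lemma~\ref{lem:invb_preserved_under_action}, noting that $|\Negs(\pi)|$ is even for $\pi\in\DD_n$. Your version is slightly more explicit in verifying that $\Sf_k$ preserves $\DD_n$ for $k\in T_n$, but the argument is the same.
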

\begin{proof}
Though $\inv_D(\pi)$ is defined only for $\pi \in \DD_n$,
$\inv_B(\pi)$ is defined even if $\pi \in \DD_n$ as $\DD_n \subseteq \BB_n$.
From \eqref{eqn:typeb-inv-defn} and \ref{eqn:inv-dtype-defn}, we get 
$\inv_B(\pi) = \inv_D(\pi) + |\Negs(\pi)|$.  Thus, 
$\inv_B(\pi) \equiv \inv_D(\pi)$ (mod 2) for $\pi \in \DD_n$.
The proof now follows from Lemma
\ref{lem:invb_preserved_under_action}.
\end{proof}

\begin{remark}
\label{rem:grou_actions}
By Lemma 
\ref{lem:invd_preserved_under_action}, under
the action of the group $\ZZ_2^m$ on $\DD_n^>$, all $\pi$ of any 
orbit $O$ have the same parity of 
$\inv_D(\pi)$ (mod 2) and thus, $O$
lies entirely in $\DD_n^{>,+}$ or entirely 
in $\DD_n^{>,-}$.  In a similar manner, it is 
easy to see 
that $\ZZ_2^m$ acts on each of the sets 
$R_n^{D,<,\pm}$, 
$R_n^{B-D,>,\pm}$ and $R_n^{B-D,<,\pm}$.  
\end{remark}

\begin{remark}
\label{rem:on-bona}
Lemma \ref{lem:invd_preserved_under_action} fails for a straightforward
extension of the  group action of B{\'o}na to the parity
based subsets of the type B and the type D cases.  
We 
illustrate this point for the type B case.  Similar
examples can be given for the type D case as well.
Given $\pi \in \BB_n$ and an index $i$, consider the sequence 
$S_i = \pi_i,\pi_{i+1},\ldots,\pi_n$ of elements of $\pi$.  In the 
sequence $S_i$, replace the $p$-th smallest element 
by the $p$-th largest element for all values of $p$.  
Let $c_i: \BB_n \mapsto \BB_n$ be
this operation.  Consider the group action generated by $c_i$ with
$i \in \{3,5,\ldots,t\}$ where $t = n-1$ if $n$ is even and 
$t = n-2$ if n is odd.  

This is a group action on $\BB_n^>$ but all orbits 
do not contain elements $\pi$ with the same parity of $\inv_B(\pi)$.
To see this, let $\pi = 1,2,\ol{3},\ol{4} \in \BB_4^>$.  Then,
$c_3(\pi) = 1,2,\ol{4},\ol{3}$.  Clearly, $\inv_B(\pi) \not\equiv
\inv_B(c_3(\pi))$ (mod 2) and so the orbit of $\pi$ lies
neither within $B_n^{>,+}$ nor $B_n^{>,-}$.
\end{remark}

Remark \ref{rem:grou_actions} helps us to prove the following refinement of 
Theorem 
\ref{thm:divisibilty_of_alternatingrunpolynomial_by_highpower_oneplust_b}.
\begin{theorem}
\label{thm:dbl_rstrct_div_d}
For positive integers $n$, the polynomials 
$R_n^{D,>,\pm} (t)$, $R_n^{D,<,\pm} (t)$, 
$R_n^{B-D,>,\pm} (t)$ and $R_n^{B-D,<,\pm}(t)$ 
are divisible by $(1+t)^m$ where $m= \nmhalf$.
\end{theorem}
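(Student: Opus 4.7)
The plan is to mimic the proof of Theorem \ref{thm:dbl_rstrct_div_b} verbatim, replacing the set $\BB_n^{>,\pm}$ with each of the four sets $\DD_n^{>,\pm}$, $\DD_n^{<,\pm}$, $(\BB_n-\DD_n)^{>,\pm}$ and $(\BB_n-\DD_n)^{<,\pm}$ in turn. The two ingredients needed are (i) the fact that the group $\ZZ_2^m$ generated by $\{\Sf_i : i \in T_n\}$ acts on each of these eight sets, and (ii) the orbit generating function formula of Lemma \ref{lem:generating_function_in_orbit}, which applies verbatim to $\altr_D$ since $\altr_D(\pi) = \altr_B(\pi)$ for $\pi \in \DD_n$ and since the combinatorial analysis underlying Lemma \ref{lem:onemorealtrunthananother} only depends on the underlying signed sequence.

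First I would verify (i). For $\pi \in \DD_n^>$ (respectively $(\BB_n - \DD_n)^>$) and $k \in T_n$, the map $\Sf_k$ flips an even number of signs, so $|\Negs(\Sf_k(\pi))| \equiv |\Negs(\pi)| \pmod{2}$, showing that $\Sf_k$ preserves membership in $\DD_n$ versus $\BB_n - \DD_n$. Since $k \geq 2$ for every $k \in T_n$, the sign of $\pi_1$ is untouched, hence the superscript $>$ (resp.\ $<$) is preserved as well. Combining these with Lemma \ref{lem:invd_preserved_under_action}, the parity of $\inv_D(\pi)$ is also preserved, so each orbit of the $\ZZ_2^m$-action lies entirely inside one of the eight sets. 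This is exactly the content of Remark \ref{rem:grou_actions}.

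Next I would assemble the divisibility statement. For each of the four polynomials, partition the underlying set into $\ZZ_2^m$-orbits $O_1, O_2, \ldots$ Lemma \ref{lem:generating_function_in_orbit} (with $\altr_B$ replaced by $\altr_D$, which is identical on $\DD_n$ and the argument carries over to $\BB_n - \DD_n$ since Lemma \ref{lem:onemorealtrunthananother} and Lemma \ref{lem:SfSfconsistency} never used the sign-sum parity of $\pi$) gives
\begin{equation*}
\sum_{\pi \in O_j} t^{\altr_D(\pi)} = t^{a_j}(1+t)^m
\end{equation*}
for some non-negative integer $a_j$. Summing over $j$ shows that each of $R_n^{D,>,\pm}(t)$, $R_n^{D,<,\pm}(t)$, $R_n^{B-D,>,\pm}(t)$ and $R_n^{B-D,<,\pm}(t)$ is $(1+t)^m$ times a polynomial with non-negative integer coefficients, which yields the theorem.

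The only non-routine point is the verification in step (i) that $\ZZ_2^m$ genuinely acts on each of the restricted sets, and in particular that parity of $\inv_D$ is preserved; this is precisely what Lemma \ref{lem:invd_preserved_under_action} provides through the identity $\inv_B(\pi) \equiv \inv_D(\pi) \pmod{2}$ for $\pi \in \DD_n$. Once that is in hand, everything else is a direct reuse of the orbit decomposition argument from Theorem \ref{thm:dbl_rstrct_div_b}; no new combinatorics is required.
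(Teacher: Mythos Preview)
Your proposal is correct and matches the paper's approach: the paper invokes Remark~\ref{rem:grou_actions} to establish that each $\ZZ_2^m$-orbit lies entirely in one of the eight sets, then sums the orbit contributions from Lemma~\ref{lem:generating_function_in_orbit} exactly as you outline. Your write-up is in fact slightly more explicit than the paper's in justifying why the action respects the $\DD_n$ versus $\BB_n-\DD_n$, the $>$ versus $<$, and the $\inv_D$-parity restrictions.
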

\begin{proof}
We first consider the polynomial $R_n^{D,>,+} (t)$.  
By Remark \ref{rem:grou_actions},
$\ZZ_2^m$ acts on $\DD_n^{>,+}$.
As done in the proof of Theorem 
\ref{thm:divisibilty_of_alternatingrunpolynomial_by_highpower_oneplust_b},
summing $t^{\altr_D(\pi)}$ over the orbits, we get that $R_n^{D,>,+} (t)$ 
is divisible by $(1+t) ^{m}$. 
In an identical manner, one can prove that $(1+t)^m$ divides the
polynomials $R_n^{D,>,-} (t)$, $R_n^{D,<,\pm} (t)$, $R_n^{B-D,>,\pm} (t)$
and $R_n^{B-D,<,\pm} (t)$.  The proof is complete. 
\end{proof}

Lemma \ref{lem:chow-ma-powers-alt} and
Theorem 
\ref{thm:divisibilty_of_alternatingrunpolynomial_by_highpower_oneplust_b}
give a moment-type identity which is implicit in the work 
of Gao and Sun.  Combining Theorem 
\ref{thm:dbl_rstrct_div_d} and Lemma \ref{lem:chow-ma-powers-alt},
we get a the following refined moment-type identity. 

\begin{theorem}
	\label{thm:moment_id_dbl_rstrct_d}
	For $n \geq 2k+3$, we have
	\begin{eqnarray*}
		1^kR_{n,1}^{D,>, \pm}+3^kR_{n,3}^{D,>, \pm }+5^kR_{n,5}^{D,>,\pm}+\cdots 
		& = & 
		2^kR_{n,2}^{D,>,\pm}+4^kR_{n,4}^{D,>,\pm}+6^kR_{n,6}^{D,>,\pm}+\cdots,
		 \\
		1^kR_{n,1}^{D,<, \pm}+3^kR_{n,3}^{D,<, \pm }+5^kR_{n,5}^{D,<,\pm}+\cdots 
		& = & 
		2^kR_{n,2}^{D,<,\pm}+4^kR_{n,4}^{D,<,\pm}+6^kR_{n,6}^{D,<,\pm}+\cdots, \\
		1^kR_{n,1}^{B-D,>, \pm}+3^kR_{n,3}^{B-D,>, \pm }+\cdots 
		& = & 
		2^kR_{n,2}^{B-D,>,\pm}+4^kR_{n,4}^{B-D,>,\pm}+\cdots, 		\\
		1^kR_{n,1}^{B-D,<, \pm}+3^kR_{n,3}^{B-D,<, \pm }+\cdots 
		& = & 
		2^kR_{n,2}^{B-D,<,\pm}+4^kR_{n,4}^{B-D,<,\pm}+\cdots.
	\end{eqnarray*}
\end{theorem}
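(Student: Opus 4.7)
The plan is to deduce Theorem \ref{thm:moment_id_dbl_rstrct_d} as a direct consequence of Theorem \ref{thm:dbl_rstrct_div_d} combined with the Chow-Ma Lemma \ref{lem:chow-ma-powers-alt}. For each of the four polynomials $f(t) \in \{R_n^{D,>,\pm}(t),\ R_n^{D,<,\pm}(t),\ R_n^{B-D,>,\pm}(t),\ R_n^{B-D,<,\pm}(t)\}$, Theorem \ref{thm:dbl_rstrct_div_d} supplies the divisibility $(1+t)^m \mid f(t)$ with $m = \nmhalf$. Writing $f(t) = \sum_{i \geq 0} f_i t^i$ (using the corresponding coefficients $R_{n,i}^{\bullet}$) and invoking Lemma \ref{lem:chow-ma-powers-alt} at exponent $k$ yields
\begin{equation*}
1^k f_1 + 3^k f_3 + 5^k f_5 + \cdots = 2^k f_2 + 4^k f_4 + 6^k f_6 + \cdots,
\end{equation*}
which is precisely the identity claimed for that $f$.

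The one thing to verify is the compatibility of the bounds: Lemma \ref{lem:chow-ma-powers-alt} requires $k \leq m-1$, while the theorem is stated under the hypothesis $n \geq 2k+3$. This hypothesis gives $(n-1)/2 \geq k+1$, hence $m = \nmhalf \geq k+1$, i.e.\ $k \leq m-1$. So the Chow-Ma identity is applicable, and the four cases follow in parallel from the four divisibility statements in Theorem \ref{thm:dbl_rstrct_div_d}.

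I do not anticipate any genuine obstacle in this proof: the substantive content has already been carried out in Theorem \ref{thm:dbl_rstrct_div_d}, where the $\ZZ_2^m$-action generated by the sign-flip maps $\Sf_i$ with $i \in T_n$ was shown (via Lemma \ref{lem:invd_preserved_under_action} and Remark \ref{rem:grou_actions}) to preserve the $\inv_D$-parity on each of the sets $\DD_n^{>,\pm}$, $\DD_n^{<,\pm}$, $(\BB_n-\DD_n)^{>,\pm}$, $(\BB_n-\DD_n)^{<,\pm}$, so that each orbit contributes a factor of $(1+t)^m$ to the corresponding generating polynomial. The passage to the moment-type identities is then purely formal, being a term-by-term application of Lemma \ref{lem:chow-ma-powers-alt} after checking the elementary arithmetic bound $k \leq m-1 \Leftrightarrow n \geq 2k+3$.
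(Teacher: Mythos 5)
Your proposal is correct and matches the paper's own derivation exactly: the paper also obtains Theorem \ref{thm:moment_id_dbl_rstrct_d} by combining the divisibility from Theorem \ref{thm:dbl_rstrct_div_d} with Lemma \ref{lem:chow-ma-powers-alt}, and your check that $n \geq 2k+3$ gives $k \leq m-1$ is the right (and only) point needing verification.
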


\section{Longest alternating subsequence polynomials}
\label{sec:alt-seq-results}

For results involving $\SSS_n$, we start by defining some sets and 
their corresponding alternating run enumerating polynomials.  
We partition $\SSS_n$ into four (disjoint) subsets and compute the
signed alternating runs polynomial on each of these subsets. 
Our partitioning is based on the type of the first and 
the last pairs.  Either pair could be an ascent or a
descent.  When $n \geq 3$, we get the following four sets:
\begin{eqnarray*}
\SSS_{n,a,a} & = & \{\pi \in \SSS_n : \pi_1 < \pi_2, \pi_{n-1}< \pi_n \},  \hspace{5 mm}
\SSS_{n,a,d}= \{\pi \in \SSS_n : \pi_1 < \pi_2, \pi_{n-1}> \pi_n \}, \\
\SSS_{n,d,a} & = &  \{\pi \in \SSS_n : \pi_1 > \pi_2, \pi_{n-1}< \pi_n \},   \hspace{5 mm}
\SSS_{n,d,d}= \{\pi \in \SSS_n : \pi_1 > \pi_2, \pi_{n-1}> \pi_n \}. 
\end{eqnarray*} 
	
Define the following alternating runs enumerator polynomials.
	
	 \begin{enumerate}
		\item $\SR_{n,a,a}(t)=\sum_{\pi \in \SSS_{n,a,a}}(-1)^{\inv(\pi)}t^{\altr(\pi)},$
		\item $\SR_{n,a,d}(t)=\sum_{\pi \in \SSS_{n,a,d}}(-1)^{\inv(\pi)}t^{\altr(\pi)} ,$
		\item $\SR_{n,d,a}(t)=\sum_{\pi \in \SSS_{n,d,a}}(-1)^{\inv(\pi)}t^{\altr(\pi)} ,$
		\item $\SR_{n,d,d}(t)=\sum_{\pi \in \SSS_{n,d,d}}(-1)^{\inv(\pi)}t^{\altr(\pi)} ,$
		\item  $\SR_{n,a,-}(t)=\sum_{\pi \in \SSS_{n,a,a} \cup \SSS_{n,a,d} } (-1)^{\inv(\pi)}t^{\altr(\pi)}.$
	\end{enumerate}
	
The following result was proved in 
\cite[Corollary 22]{dey-siva-sgn-alt-runs-enum}.
	
\begin{theorem}
\label{thm:nmod4_uni}
For positive integers $k$, the following signed enumeration 
results hold.  When $n=4k$ and $n=4k+1$, we have:
\begin{eqnarray*}
\SR_{n,a,a}(t) & = &\SR_{n,d,d}(t)=t(1+t^2)(1-t^2)^{2k-2}, \\
\SR_{n,a,d}(t) & = & \SR_{n,d,a}(t)=-2t^2(1-t^2)^{2k-2}, \\
\SR_{n}(t) & = & 2t(1-t)^{2k}(1+t)^{2k-2}.
\end{eqnarray*}
	
When $n=4k+2$ and $n=4k+3$, we have:
\begin{eqnarray*}
\SR_{n,a,d}(t) & = & 0   \mbox{ and } \SR_{n,d,a}(t)=0,\\
\SR_{n,a,a}(t) & = & -\SR_{n,d,d}(t) = t(1-t^2)^{2k}, \\
\SR_{n}(t) & = & 0. 
\end{eqnarray*}
\end{theorem}

Using Theorem \ref{thm:nmod4_uni}, we get the following 
version of \eqref{eqn:reln_altruns-altseq} for $\AAA_n$ and 
$\SSS_n - \AAA_n$.

\begin{theorem}
\label{thm:reltn_altrun_altseq_typea_pm}
For positive integers $n=4k$ and $n=4k+1$, we have 
\begin{equation}
\label{eqn:rltnbetweenaltrunaltsubseq01mod4}
\AS_n^{\pm}(t)= \frac{(1+t)R_n^{\pm}(t)}{2}. 
\end{equation}
For positive integers $n=4k+2$ and $n=4k+3$, we have 
\begin{equation}
\label{eqn:rltnbetweenaltrunaltsubseq23mod4}
\AS_n^{\pm}(t)= \frac{(1+t)R_n^{\pm}(t) \pm t(1-t)(1-t^2)^{2k}}{2}. 
\end{equation}
\end{theorem}
	
\begin{proof}
Consider the map  $f: \SSS_n \mapsto \SSS_n$  defined as

$$f(\pi_1, \pi_2, \dots, \pi_n)= n+1-\pi_1, n+1-\pi_2, \dots, n+1-\pi_n.$$
		
As the map $f$ flips the parity of $\inv$ if and only if 
$n \equiv 0,1$ (mod 4), we have 
\begin{eqnarray}
\SAS_n(t) = \begin{cases}
\displaystyle (1+t)\SR_{n,a,-}(t)& \text {when $n \equiv 0,1$ (mod 4)}.\\
\displaystyle (1-t)\SR_{n,a,-}(t)& \text {when $n \equiv 2,3$ (mod 4)}.
\end{cases}
\end{eqnarray}
		
Therefore, when $n=4k$ or $n=4k+1$, we have 
\begin{eqnarray*}
2\AS_n^{\pm}(t) & = &\AS_n(t) \pm \SAS_n(t) \\
& = & \frac{1}{2}(1+t)R_n(t) \pm (1+t)\SR_{n,a,-}(t)\\
& = &  \frac{1}{2}(1+t)R_n(t) \pm \frac{1}{2}(1+t)\SR_{n}(t) = (1+t)R_n^{\pm}(t).
\end{eqnarray*}
The second line above uses \eqref{eqn:reln_altruns-altseq}. 
Therefore, we are done in this case. When $n=4k+2$ or $n=4k+3$, we have  
\begin{eqnarray*}
2\AS_n^{\pm}(t) & = &\AS_n(t) \pm \SAS_n(t) \\
& = & \frac{1}{2}(1+t)R_n(t) \pm (1-t)\SR_{n,a,-}(t)\\
& = &  (1+t)R_n^{\pm}(t) \pm t(1-t)(1-t^2)^{2k}.
\end{eqnarray*}
The last line uses  
\cite[Corollary 22]{dey-siva-sgn-alt-runs-enum}.
The proof is complete. 
\end{proof}
	
We need the following result proved in 
\cite[Theorem 3]{dey-siva-sgn-alt-runs-enum}.
\begin{theorem}
\label{thm:Rnplusminust_is_divible_by_1plust_m}
For positive integers $n \geq 4$, let $m = \floor{(n -2)/2}$.
When $n \equiv 0,1$ (mod 4), the 
polynomials $R_n^{\pm}(t)$ are divisible by  $(1 + t)^{m-1}$, 
When $n \equiv 2,3$ (mod 4), the polynomials 
$R_n^{\pm}(t)$ are divisible by $(1 + t)^m$.
\end{theorem}

{\bf Proof of Theorem \ref{thm:divisibility_altseq_typea_pm} :}
Follows immediately from Theorem 
\ref{thm:reltn_altrun_altseq_typea_pm} and Theorem 
\ref{thm:Rnplusminust_is_divible_by_1plust_m}.
{\hspace*{\fill}{\eod}}
	
We move on to our final proof of this work.

{\bf Proof of Theorem \ref{thm:divisibility_altseq_typebd} :}
Consider the type B case first.
Using the map $\Sf_1$, it is easy to see that
$$\AS^B_n(t)= (1+t) \sum _{\pi \in \BB_n^>} t^{\as_B(\pi)}= 
(1+t)R_n^{B,>}(t).$$ 
The proof is complete by combining with Theorem
\ref{thm:divisibilty_of_alternatingrunpolynomial_by_highpower_oneplust_b}.
We give our proof for $\AS_n^{B,\pm}(t)$ below.

For the type D case, recall the following sets from 
Section \ref{Sec:proofofmainthm}:  let
$T_n= \{1,3,5,\dots,n-1\}$ when $n$ is even and 
let $T_n= \{2,4,6,\dots,n-1\}$ when $n$ is odd. 
As seen in Section  \ref{Sec:proofofmainthm},  
we have a group action on
$\DD_n$.  We can repeat the same argument to get 
that $\AS_n^D(t)$ is divisible by $(1+t)^{|T_n|}$.
As $|T_n|= \floor{n/2}$, we are done.  
Further, by Lemma \ref{lem:invb_preserved_under_action} and
Lemma \ref{lem:invd_preserved_under_action}, since these 
are actions on $\DD_n^{\pm}$ and on $\BB_n^{\pm}$, an 
identical argument shows that the polynomials
$\AS_n^{B, \pm}(t)$ and $\AS_n^{D, \pm}(t)$  are divisible
by $(1+t)^{\floor{\frac{n}{2}}}$, completing the proof.
{\hspace*{\fill}{\eod}}

\begin{remark}
\label{rem:type-b-example}
The difference in the exponent of $(1+t)$ between the whole 
group and its positive-and-negative parts in Theorem
\ref{thm:divisibility_altseq_typea_pm} and Theorem
\ref{thm:divisibility_altseq_typebd}
exists and our result is the best that one can get.  
We give examples below.  For the $\SSS_n$ case, 
when $n=6$, one can check that $(1 + t)^3$ divides
$\AS_6(t)$, while $(1+t)^2$
divides $\AS^{\pm}_6(t)$ as we have 
\begin{eqnarray*}
\AS_6(t)  
 & = & (t+1)^3  (61t^3 + 28t^2 + t ) , \\
\AS^{+}_6(t) 
& = & (t + 1)^2t  (6t^2 + 8t + 1)(5t + 1  ), \\
\AS^{-}_6(t) 
& = & (t + 1)^2t^2(31t^2 + 43t + 16).
\end{eqnarray*}
For the type B case, when $n=3$, one can check that $(1 + t)^2$ 
divides $\AS^B_3(t)$, while $(1+t)$ divides
 $\AS^{B,\pm}_3(t)$ as we have
\begin{eqnarray*}
\AS^B_3(t)   = &  11t^4+23t^3+13t^2+t &  = (t+1)^2 t (11t + 1), \\
\AS^{B,+}_3(t)  = & 6t^4+11t^3+6t^2+t &  =  (t + 1)t(2t + 1)(3t + 1), \\
\AS^{B,-}_3(t)  = &  5t^4+12t^3+7t^2 & =  (t + 1)t^2(5t + 7).
\end{eqnarray*}

Finally, we also mention that moment identities similar to Theorem
\ref{thm:moment_id_dbl_rstrct_b} and Theorem
\ref{thm:moment_id_dbl_rstrct_d} can be given involving 
the coefficients of $\AS_n^{\pm}(t)$, $\AS_n^{B,\pm}(t)$ and 
$\AS_n^{D,\pm}(t)$
\end{remark}


\section*{Acknowledgement}
The first author acknowledges support from a CSIR-SPM
Fellowship.

\bibliographystyle{acm}
\bibliography{main}
\end{document}